\newtheorem{theorem}{Theorem}
\newtheorem{corollary}{Corollary}
\newtheorem{remark}{Remark}
\newcommand{\red}[1]{{\color{red}#1}}
\newcommand{\ML}[1]{{\color{black}#1}}
\newcommand{\IL}[1]{{\color{black}#1}}
\newcommand{\MM}[1]{{\color{black}#1}}
\newcommand{\MS}[1]{{\color{black}#1}}
\newcommand{\KT}[1]{{\color{black}#1}}
\newcommand{\AlgName}[0]{{IBEG}}
\begin{document}
\title{An Exact Method for Fortification Games}

\author[1]{Markus Leitner \thanks{m.leitner@vu.nl}}
\affil[1]{Department of Operations Analytics, Vrije Universiteit Amsterdam, Netherlands}
	
\author[2]{Ivana Ljubi\'c \thanks{ljubic@essec.edu}}
\affil[2]{ ESSEC Business School of Paris, France}
		
\author[3]{Michele Monaci \thanks {michele.monaci@unibo.it}}
\affil[3]{Department of Electrical, Electronic and
	Information Engineering, University of Bologna,
	Italy}

\author[4]{Markus Sinnl \thanks {markus.sinnl@jku.at, } }
\author[4]{K\"ubra Tan{\i}nm{\i}\c{s} \thanks {kuebra.taninmis\_ersues@jku.at}}

\affil[4]{Institute of Production and Logistics Management, Johannes Kepler University Linz, Austria}

\date{}
\maketitle

\begin{abstract}
    
A fortification game (FG) is a three-level, two-player Stackelberg game, also known as defender-attacker-defender game, in which at the uppermost level, the defender selects some assets to be protected  from potential malicious attacks. At the middle level, the attacker solves an interdiction game by depreciating unprotected assets, i.e., reducing the values of such assets for the defender,
while at the innermost level the defender solves a recourse problem over the surviving or partially damaged assets. Fortification games have applications in various important areas, such as military operations, design of survivable networks, protection of facilities or power grid protection. 
%
In this work, we present an exact solution algorithm for FGs, in which the recourse problems correspond to (possibly NP-hard) combinatorial optimization problems. The algorithm is based on a new generic mixed-integer linear programming reformulation in the natural space of fortification variables. Our new model makes use of \emph{fortification cuts} that measure the contribution of a  given fortification strategy to the objective function value. These cuts are generated on-the-fly by solving separation problems, which correspond to (modified) middle-level interdiction games. %
We design a branch-and-cut-based solution algorithm based on fortification cuts, their \KT{strengthened} versions and other speed-up techniques.
We present a computational study using the knapsack fortification game and the shortest path fortification game. For the latter one, we include a comparison with a state-of-the-art solution method from the literature. Our algorithm outperforms this method and allows us to solve previously unsolved instances \MS{with up to 330\,386 nodes and 1\,202\,458 arcs} to optimality.
\end{abstract}

{\bf Keywords:} {
Three-Level Optimization, Branch-and-Cut, Fortification Games, Shortest Path Fortification, Maximum Knapsack Fortification}

\newpage
\section{Introduction}

\IL{Fortification games (FGs), also known as \emph{defender-attacker-defender} (DAD) problems, have applications in various important areas, such as military operations, the design of survivable networks, protection of facilities, or power grid protection \citep{lozano2017backward,smith2020survey}.} A FG is a three-level, two-player Stackelberg game: At the third (innermost) level the defender wants to solve some optimization
problem (denoted as \emph{recourse} problem) which depends on some resources (\emph{assets}). 
At the second level, the attacker can select a subset of the assets to attack. 
Depending on the problem setting, this attack can either 
destroy the 
assets, or depreciate 
their usefulness for the defender. 
The goal of the attacker is to make the result of the optimization problem of the defender as worse as possible. 
Such a 
two-level attacker-defender problem described so far is known as interdiction game (IG) in which case
the actions of the attacker are called \emph{interdictions}. 
In FGs, the defender can prevent interdictions
at the first (outermost) level by 
fortifying some of its 
assets against potential attacks. Assets that are fortified cannot be interdicted by the attacker. 

\IL{We focus on a particular family of fortification games 
which is defined as follows.} Let $N$ be the set of assets, $f_i \geq 0$ be the cost for fortification of each asset $i \in N$, and $B_F$ be the fortification budget. 
Similarly, let $g_i\geq 0$ be the interdiction cost for each asset $i \in N$, and $B_I$ be the interdiction budget. 
Finally, let $d_i \geq 0$ denote the depreciation of an asset $i \in N$ due to interdiction. 
In the following, we will denote by $w$, $x$ and $y$ the incidence vector of  a fortification, an interdiction, and a recourse
strategy, respectively. 
Accordingly, we denote by $W=\{w\in \{0,1\}^{n}: \sum_{i\in N}f_i w_i\leq B_F\}$ the set of feasible fortifications and by
$X=\{x\in \{0,1\}^{n}:\sum_{i\in N}g_i x_i\leq B_I\}$ the set of possible interdictions if there would be no fortifications\KT{, where $n=|N|$}.
In addition, we let $X(w)=X\cap \{x\in \mathbb{R}^n: x\leq 1-w \}$ be the set of feasible interdictions for a given fortification strategy $w$,
and we denote by $Y \subseteq \{0,1\}^n$ the feasible region of the recourse problem, which is assumed to be non-empty. 
Using these definitions, the FGs studied in this work can be defined as 

\begin{align}
z^*= \, \min_{w\in W}\, \max_{x \in X(w)}\, &  \min_{y\in Y} \, \left\{ c^T y + \sum_{i\in N} d_i x_i y_i \right\} . \label{eq:P1} \tag{0-1 FG}
\end{align}

%


\begin{remark} Notice that according to our definition of \eqref{eq:P1}, interdiction decisions do not affect the feasible region $Y$ of the recourse problem, 
and each such decision is associated with a cost increase/penalty $d_i\geq 0$. 
However, without loss of generality, this definition also allows to model problems in which the 
attacker prohibits the use of the interdicted assets, 
i.e., where $Y(x)=Y \cap \{y \in  \mathbb{R}^n: y\leq 1- x\}$. In this case, the resulting problem 
\begin{align*}
z^*= \, \min_{w\in W}\, \max_{x \in X(w)}\, &  \min_{y\in Y(x)} \, c^T y
\end{align*}
can be reformulated as 
\begin{align*}
z^*= \, \min_{w\in W}\, \max_{x \in X(w)}\, &  \min_{y\in Y} \, \left\{  c^T y + \sum_{i\in N} \mathcal{M}_i x_i y_i \right\}
\end{align*}
for sufficiently large $\mathcal{M}_i$ (see, e.g., \cite{fischetti2019interdiction}), 
i.e., in problem \eqref{eq:P1} we set $d_i:=\mathcal{M}_i$.
\label{remark_P1}
\end{remark}
\MM{Finally, we notice that} 
\IL{
more general types of fortification games \MM{have been introduced in the literature.
For example, \citet{alderson2011solving} considered the case} in which the feasible region of the \ML{innermost} problem depends on $w$ as well 
(i.e., we have $Y(w,x)$ instead of $Y(x)$), \MM{whereas \citet{brown2006defending} addressed situations} 
in which the third level variables are not necessarily binary and there might exist some available capacity which is invulnerable to the attack 
(e.g., $y \le u_0 + u(1-x)$). These more general settings are out of scope of this paper, and we focus on a particular version defined by the problem \eqref{eq:P1}. 
}


\subsection{Contribution and Outline}

In this work, we present an exact solution algorithm for \eqref{eq:P1}. The algorithm is based on a new generic mixed-integer programming (MIP) formulation for \eqref{eq:P1} which makes use of valid inequalities, denoted as \emph{fortification cuts}. These cuts are used to measure the objective function value for given fortification strategies, and are generated on-the-fly by solving separation problems, which 
correspond to (modified) IGs. 

The detailed contribution and outline is summarized as follows.

\begin{itemize}
    \item We present a general solution framework for the  problem \eqref{eq:P1} whose recourse can be an arbitrary (possibly NP-hard) 
    combinatorial optimization problem with linear objective function and discrete variables. 
    Thus, we extend theory and methodology of the existing exact algorithms for fortification games, the majority of which  requires the recourse problems to be convex.
    \item We introduce a single-level MIP reformulation for \eqref{eq:P1} using fortification cuts, present alternative methods for \KT{strengthening} these
    cuts, and discuss their efficient separation.
    \item We propose further algorithmic enhancements, that allow to speed-up the separation problem, based on heuristic solutions of the interdiction problem.  
    \item We describe the application of our generic solution algorithm to two concrete problems, 
    namely the knapsack fortification game and the shortest path fortification game, and present an extensive computational
    study of the performance of our algorithm. This analysis is aimed at evaluating the contribution of different ingredients of our
    algorithm as well as 
    at comparing its effectiveness with other exact approaches from the literature.
 \end{itemize}

The paper is organized as follows.
In the remainder of this section, we provide an overview of previous and related work, whereas 
in Section \ref{section:FortCut} we introduce our reformulation and derive the fortification cut for a given intersection strategy.
Moreover, we discuss methods for \KT{strengthening} a given cut and analyze the associated separation problem.
Other general speed-up techniques that are used in our solution algorithm are presented in Section \ref{sec:algdetails}.
Sections \ref{section:KIF} and \ref{section:SPIF} describe two applications that can be modeled as \KT{an FG}.
A detailed computational study on both problems is given in Section \ref{section:Results}, where we 
discuss further, problem-specific implementation details, assess the efficiency of 
the various ingredients of the algorithm, and compare the results of our method with state-of-the-art exact approaches from the literature.
Finally, Section \ref{section:conclusions} draws some conclusions and reports possible new lines of research.

\subsection{Previous and Related Work}
Closely related to FGs are IGs,
which are \IL{two-player} two-level Stackelberg games used to model attacker-defender settings.  
The attacker, who acts first, has limited resources and an attack consists of disabling the defender's assets, reducing their capacity or increasing their cost. At the lower level, the defender solves the recourse problem over the set of surviving or partially damaged assets.   IGs arise in military applications \citep{brown2006defending},  in controlling the spread of infectious diseases
\citep{Assimakopoulos:1987,Shen-et-al:2012, Furini-et-al-OR:2021,Furini-et-al:2020, taninmis2020improved},
in counter-terrorism \citep{Wang-et-al:2016}, or in monitoring of communication networks \citep{FuriniLSZ21,FuriniLMS19}. Very often, IGs are defined over networks, in which the attacker reduces the capacities of nodes or edges, or even completely removes some of them from the network \citep{wood2010bilevel}. Some of the most famous examples of so-called network-interdiction games include interdiction of shortest paths \citep{israeli2002shortest} or  network flows \citep{Lim-Smith:2007,smith2008algorithms,Akgun:2011}. However, IGs turn out to be much more difficult if the recourse problem is NP-hard, like  maximum knapsack  \citep{caprara2016bilevel,fischetti2019interdiction,della2020exact} or  maximum clique \citep{FuriniLSZ21,FuriniLMS19}, making the associated IG  a $\Sigma_2^P$-hard problem \citep{Lodi-et-al:2014}.

In FGs, the defender tries to ``interdict'' the attacker, by anticipating the attacker's malicious activities, i.e., the defender tries to protect the most vulnerable assets before the attack is taking place. Even though there exists a large body of literature dedicated to IGs, see, e.g., the two recent surveys in \cite{smith2020survey,Kleinert-et-al:2021},
 there are very few articles dealing with more difficult three-level optimization problems arising in the context of FGs. 
The applications of FGs stem from similar settings as for the IGs. 
Three-level DAD models to protect electric power grids have been used by \citet{brown2006defending,yuan2014optimal,xiang2018improved,Lai-et-al:2019}, and \citet{Fakhry-et-al:2021}.   \citet{smith2007survivable} proposed to design networks that can survive network-flow attacks by using FGs. Similarly, \citet{Sarhadi-et-al:2017} used a DAD model for protection planning of freight intermodal transportation networks.
Other FG models for protecting transportation networks can be found in \citet{Jin-et-al:2015,StaritaScaparra:2016}, and \citet{Starita-et-al:2017}. 
In the context of supply chain networks and protection of facilities, we highlight DAD models presented by \citet{church2007protecting,scaparra2008exact}, and \citet{zheng2018exact}. 
Recently, a trilevel critical node problem used for limiting the spread of a viral attack in a given network has been
considered in \citet{Baggio-et-al:2021}.
We point out that our list of applications of FGs is far from being comprehensive, and we refer an interested reader to further references provided in the articles mentioned above.      

 When it comes to general techniques used to solve FGs,
  we distinguish between duality-based and reformulation-based methods. Duality-based methods  can be applied to FGs in which the recourse problem is a linear/convex program, so that after its  dualization (and potential linearization), the three-level model is turned into a bilevel 
  min-max mixed-integer  formulation \citep{brown2006defending}. The latter can then be solved using some of the state-of-the-art  solvers for mixed-integer bilevel optimization (see, e.g., \citet{fischetti2017new,fischettiMP,tahernejad2020branch}).
  Reformulation-based methods exploit a particular problem structure of the recourse or interdiction problem (by, e.g., enumerating all possible attack plans), so that an  equivalent bilevel or single-level model can be derived \citep{church2007protecting,scaparra2008bilevel,cappanera2011optimal}. 
  Heuristics
  are much more prevalent than exact methods in the existing literature on FGs. The most recent generic heuristics for FGs have been  proposed by \citep{Fakhry-et-al:2021}.
  
  Closest to our work is the backward sampling framework for FGs recently proposed by \citet{lozano2017backward}. The authors develop a cutting-plane method assuming 
  discrete fortification and interdiction strategies (i.e., all variables in the first two levels are assumed to be binary). 
  Thanks to the sampling of possible solutions of the third-level, the recourse problem can also be non-convex (non-linear, or involving discrete variables). For any given fortification strategy vector $w$, a subset of sampled third-level solutions is used as a basis  to derive valid lower and upper bounds of the embedded IG. The overall method is completed by an outer optimization over $w$ variables, where specific interdiction strategies are eliminated using no-good-cut-like covering inequalities.  
  
\section{Solution Framework}
\label{section:FortCut}

In this section, we first introduce a single-level reformulation of \eqref{eq:P1} in the space of fortification variables $w$ which uses the family of \emph{fortification cuts}. Afterwards, we present different methods for \KT{strengthening} these cuts and discuss methods for their separation.


\subsection{Single-level Reformulation and Fortification Cuts}
Let
$\Phi_I(x)$ be the value function of the interdiction game associated to a given interdiction strategy $x\in X$, i.e., 
\begin{align*}
\Phi_I(x)= \min_{y\in Y} \left\{ c^T y + \sum_{i\in N} d_i x_i y_i \right\}.
\end{align*}
Similarly, let $\Phi_F(w)$ be the value function of the fortification game for a given fortification strategy $w\in W$, i.e.,
\begin{align}
\Phi_F(w) &= \max_{x \in X(w)}\, \min_{y\in Y} \left\{ c^T y + \sum_{i\in N} d_i x_i y_i \right\} = \max_{x \in X(w)} \Phi_I(x) \notag \\
&= \max_{x \in X} \left\{ \Phi_I(x) - \sum_{i\in N} M_i x_i w_i \right\}. \label{eq:eq7}
\end{align}
The last equations holds for sufficiently large coefficients $M_i$, due to the fact that the linking constraints between the fortification and subsequent interdiction are given by $x \le 1 - w$. 
Using this notation, problem~\eqref{eq:P1} can equivalently be written as
\begin{align}
z^*= \, \min_{w\in W}\, &\theta \label{eq:reform_obj}\\
&\theta \geq \Phi_F(w). \label{eq:reform_feas}
\end{align}

Equation \eqref{eq:eq7} implies that, for any $w \in W$ and $\hat{x}\in X$, we have 
\begin{equation}
    \Phi_F(w)\geq \Phi_I(\hat{x}) - \sum_{i\in N} M_i \hat{x}_i w_i,
\label{eq:PhiF}
\end{equation}
which leads to the single level reformulation of the fortification game 
\begin{align}
z^*= \, \min_{w\in W}\, &\theta \label{eq:fortification-obj} \\
&\theta \geq \Phi_I(\hat{x}) - \sum_{i\in N} M_i \hat{x}_i w_i \hspace*{1cm} \forall \hat{x} \in X. 
\label{eq:fortification}
\end{align}

Given the possible exponential number of inequalities \eqref{eq:fortification}, this reformulation is suitable for being solved using a 
cutting-plane or a branch-and-cut approach in which they are 
initially omitted and then 
added on-the-fly when
violated. The resulting formulation in which some of the constraints \eqref{eq:fortification} are relaxed, is referred to as \emph{relaxed master problem}. 
There are two potential drawbacks in using formulation \eqref{eq:fortification-obj}-\eqref{eq:fortification} in practical applications. First, if not carefully chosen, the values of $M_i$ can lead to very weak dual bounds and potentially underperforming branch-and-bound trees. Second, to separate inequalities \eqref{eq:fortification} one has to solve the middle-level IG, a problem which can be $\Sigma_2^P$-hard, for NP-hard recourse problems. To address the first issue, we show in Theorem \ref{theo:fort_cut}  
that tight values for the $M_i$ coefficients can be easily derived, no matter the structure of the recourse problem. We then show how these 
\emph{fortification cuts}, can be \KT{strengthened} to tighten the dual bounds. We address the separation issue in Section \ref{section:separation}.

\begin{theorem}
Constraints \eqref{eq:fortification} can be replaced with the following \emph{fortification cuts}:
\begin{equation}
\theta \geq \Phi_I(\hat{x}) - \sum_{i\in N} d_i \hat{x}_i w_i \hspace*{1cm} \forall \hat{x} \in X. \label{eq:fort_cut}
\end{equation}
\label{theo:fort_cut}
\end{theorem}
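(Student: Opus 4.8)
The plan is to prove the claim by showing that the two families of constraints induce the same value function for every fixed fortification strategy, so that the replacement leaves $z^*$ unchanged. Since \eqref{eq:fortification} is equivalent to requiring $\theta \ge \max_{\hat x \in X}\{\Phi_I(\hat x) - \sum_{i\in N} M_i \hat x_i w_i\}$, which equals $\Phi_F(w)$ by \eqref{eq:eq7}, it suffices to prove that for each $w \in W$,
\begin{equation*}
\max_{\hat x \in X}\Big\{\Phi_I(\hat x) - \sum_{i\in N} d_i \hat x_i w_i\Big\} = \Phi_F(w) = \max_{x \in X(w)} \Phi_I(x).
\end{equation*}
I would establish this identity by proving the two inequalities separately. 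The direction ``$\ge$'' is immediate: any $x \in X(w)$ satisfies $x_i w_i = 0$ for all $i$ (because $x \le 1-w$), so the penalty term vanishes and $x$ is feasible for the left-hand maximization with the same objective value; taking the maximizer of $\Phi_I$ over $X(w)$ then gives the bound.

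For the reverse direction ``$\le$'', I would fix an arbitrary $\hat x \in X$ and project it away from the fortified assets by setting $\bar x_i := \hat x_i(1-w_i)$. Since $\bar x \le \hat x$ componentwise and $X$ is defined by a single budget constraint with nonnegative coefficients, $\bar x$ remains in $X$; moreover $\bar x \le 1-w$, so $\bar x \in X(w)$ and hence $\Phi_I(\bar x) \le \Phi_F(w)$. It then remains to show $\Phi_I(\hat x) - \Phi_I(\bar x) \le \sum_{i\in N} d_i \hat x_i w_i$, which yields $\Phi_I(\hat x) - \sum_{i\in N} d_i \hat x_i w_i \le \Phi_I(\bar x) \le \Phi_F(w)$ and, after maximizing over $\hat x \in X$, the desired inequality.

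The crux of the argument is this last bound, and it is where I expect the only real work. Let $\bar y \in Y$ be an optimal recourse for $\bar x$, so that $\Phi_I(\bar x) = c^T \bar y + \sum_{i\in N} d_i \bar x_i \bar y_i$. Using $\bar y$ as a feasible (but possibly suboptimal) recourse for $\hat x$ gives $\Phi_I(\hat x) \le c^T \bar y + \sum_{i\in N} d_i \hat x_i \bar y_i$. Subtracting, the $c^T\bar y$ terms cancel and the difference telescopes to $\sum_{i\in N} d_i(\hat x_i - \bar x_i)\bar y_i = \sum_{i\in N} d_i \hat x_i w_i \bar y_i$, since $\hat x_i - \bar x_i = \hat x_i w_i$. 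Finally, because $d_i \ge 0$ and $\bar y_i \in \{0,1\}$, this is at most $\sum_{i\in N} d_i \hat x_i w_i$, completing the estimate. Intuitively, interdicting a fortified asset can help the attacker by at most $d_i$ per asset, the penalty being incurred only when that asset is actually used in the recourse ($\bar y_i = 1$), which is precisely why the large coefficients $M_i$ can be tightened down to $d_i$ without weakening the reformulation. The main subtlety to handle with care is the one-sidedness of the optimality arguments: the key inequality must be driven by the suboptimal use of $\bar y$ for $\hat x$, so that the factor $\bar y_i$ (rather than the unknown optimal recourse for $\hat x$) appears and can be bounded by $1$.
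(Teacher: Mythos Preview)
Your proposal is correct and follows essentially the same approach as the paper: both define the projected interdiction $\bar x_i=\hat x_i(1-w_i)\in X(w)$, take an optimal recourse $\bar y$ for $\bar x$, and use $\bar y$ as a feasible (suboptimal) recourse for $\hat x$ to bound $\Phi_I(\hat x)-\Phi_I(\bar x)\le\sum_i d_i\hat x_i w_i$. The only differences are presentational: the paper argues by contradiction and splits $N$ into $N_1=\{i:\hat x_i w_i=1\}$ and its complement, whereas you give the direct telescoping computation and make the ``$\ge$'' direction (tightness) explicit rather than leaving it implicit in the fact that $d_i\le M_i$.
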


\begin{proof}

\KT{We need to} show that $\Phi_F(w)\geq \Phi_I(\hat x) - \sum_{i\in N} d_i \hat{x}_i w_i$ for all $w\in W$ and $\hat{x} \in X$, i.e., that \eqref{eq:PhiF} holds for $M_i=d_i$. 

This is straightforward in case 
$\hat{x} \in X(w)$ since, by definition, we have $\hat{x}_i w_i = 0$ for all $i \in N$, and
\eqref{eq:PhiF} reduces to $\Phi_F(w)\geq \Phi_I(\hat{x})$,
implying inequality \eqref{eq:fort_cut}. 
    
If instead $\hat{x} \not \in X(w)$, we 
consider 
interdiction strategy $x^\prime$ defined by selecting those assets in $\hat{x}$ that are
    not fortified by $w$, i.e.,
    $x^\prime_i = \hat{x}_i (1-w_i)$, for all $i\in N$.
	Since $\hat{x} \in X$, we have $x^\prime \in X(w)$ and \eqref{eq:PhiF}, therefore, $\Phi_F(w)\geq \Phi_I(x^\prime)$ holds.
    We complete the proof by showing that 
    \begin{equation}
    \Phi_I(x^\prime)\geq \Phi_I(\hat{x}) - \sum_{i\in N} d_i \hat{x}_i w_i.
    \end{equation}
    By contradiction, assume that 
    $\Phi_I(\hat{x}) - \sum_{i\in N} d_i \hat{x}_i w_i > \Phi_I(x^\prime)$, and 
    let $y^\prime\in Y$ be an optimal recourse strategy 
    for interdiction strategy $x^\prime$. 
    Using the definition of value function $\Phi_I(x^\prime)$, we obtain
\begin{equation}
  \Phi_I(\hat{x}) - \sum_{i\in N} d_i \hat{x}_i w_i >
  c^T y^\prime + \sum_{i\in N} d_i x^\prime_i y^\prime_i. \label{eq:z_hat vs z_prime}
\end{equation}
Using notations 
$N_1 = \{i\in N: \hat{x}_i \, w_i=1\} = 
\{i \in N: \hat{x}_i=1, x^\prime_i=0 \}$,
and $N_2=N\setminus N_1$, 
inequality~\eqref{eq:z_hat vs z_prime} can be 
rewritten as 
\begin{equation}
\Phi_I(\hat{x}) > 
  c^T y^\prime + 
  \sum_{i\in N_2} d_i x^\prime_i y^\prime_i + 
  \sum_{i\in N_1} d_i,
\end{equation}
since $x^\prime_i = 0$ for all $i \in N_1$,
and $\hat{x}_i w_i = 1$ for all $i \in N_1$.
Since 
$x^\prime_i = \hat{x}_i$ for all $i\in N_2$, and 
$d_i\geq d_i \hat{x}_i y^\prime_i$ for all $i \in N$, it follows that
\begin{equation}
\Phi_I(\hat{x}) > 
  c^T y^\prime + 
  \sum_{i\in N_2} d_i \hat{x}_i y^\prime_i + 
  \sum_{i\in N_1} d_i \hat{x}_i y^\prime_i = 
  c^T y^\prime + 
  \sum_{i\in N} d_i \hat{x}_i y^\prime_i
\end{equation}
which shows that 
$y^\prime$ is a feasible
recourse strategy
for interdiction strategy 
$\hat{x}$ whose cost is strictly smaller than 
$\Phi_I(\hat{x})$. This contradiction concludes the
proof.
\end{proof}


\subsection{\KT{Strengthening} the Fortification Cuts with Enumeration}
\label{section:Lift_enum}

\MM{
In this section, we introduce sufficient conditions under which a vector of coefficients 
produces a valid cut strengthening a given fortification cut.
}

\begin{theorem}\label{th:enum}
Let $\hat{x}\in X$ be a feasible interdiction
strategy and 
denote by $S(\hat{x})=\{i\in N: \hat{x}_i=1\}$ the 
associated set of interdicted assets.
Let $\tilde{d}\in \mathbb{R}^n$ be a vector 
satisfying the following conditions:
\begin{enumerate}
\item $\tilde{d}_i \leq d_i$,  for all $i\in N$,
\item $\displaystyle{\sum_{i\in P} \tilde{d}_i} \geq  \Phi_I(\hat{x}) - \Phi_I(x^\prime)$ 
for each $P\subseteq S(\hat{x})$ such that there exists a {feasible} fortification strategy 
$w \in W: w_i=1$ for all $i \in P$,
and
$x^\prime$ is the interdiction strategy with 
$x^\prime_i = 0$ for all $i \in P$, and
    $x^\prime_i=\hat{x}_i$ otherwise.
\end{enumerate}

Then, the 
inequality
\begin{equation}
\theta \geq \Phi_I(\hat{x}) - \sum_{i\in N} \tilde{d}_i \hat{x}_i w_i
\label{eq:liftEnum}
\end{equation}
is 
valid 
for
\eqref{eq:P1}
and dominates the fortification cut~\eqref{eq:fort_cut} for interdiction strategy $\hat{x}$.
\label{theo:liftEnum}
\end{theorem}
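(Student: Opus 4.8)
The plan is to prove two things separately: that~\eqref{eq:liftEnum} is valid for~\eqref{eq:P1}, and that it dominates the fortification cut~\eqref{eq:fort_cut} attached to the same $\hat{x}$. I would dispatch domination first, as it is immediate. Taking the difference of the two right-hand sides yields $\sum_{i\in N}(d_i-\tilde{d}_i)\hat{x}_i w_i$; by condition~1 each factor $d_i-\tilde{d}_i\geq 0$, and since $\hat{x}_i,w_i\geq 0$, every term is nonnegative. Hence the right-hand side of~\eqref{eq:liftEnum} is at least that of~\eqref{eq:fort_cut} for every admissible $w$, so any $\theta$ satisfying~\eqref{eq:liftEnum} automatically satisfies~\eqref{eq:fort_cut}.

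For validity, mirroring the reduction used in the proof of Theorem~\ref{theo:fort_cut}, it suffices to show $\Phi_F(w)\geq \Phi_I(\hat{x})-\sum_{i\in N}\tilde{d}_i\hat{x}_i w_i$ for every $w\in W$ (the bound $\theta\geq\Phi_F(w)$ then closes the gap). I would fix such a $w$ and set $P=\{i\in S(\hat{x}):w_i=1\}$, the interdicted assets that $w$ fortifies. Since $\hat{x}_i w_i=1$ precisely when $i\in P$ and vanishes otherwise, the subtracted term collapses to $\sum_{i\in P}\tilde{d}_i$, reducing the target to $\Phi_F(w)\geq \Phi_I(\hat{x})-\sum_{i\in P}\tilde{d}_i$. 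The crucial observation is that $w$ itself is a valid witness for this set $P$ in condition~2: it is feasible and satisfies $w_i=1$ on $P$ by construction, so condition~2 gives $\Phi_I(\hat{x})-\sum_{i\in P}\tilde{d}_i\leq\Phi_I(x')$, where $x'_i=0$ on $P$ and $x'_i=\hat{x}_i$ elsewhere.

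To finish, I would identify $x'$ as exactly the projection $x'_i=\hat{x}_i(1-w_i)$ appearing in Theorem~\ref{theo:fort_cut}: on $P$ we have $w_i=1$ and $x'_i=0$, while off $P$ we have $x'_i=\hat{x}_i$, matching $\hat{x}_i(1-w_i)$ componentwise. Because $\hat{x}\in X$ forces $x'\in X(w)$, the definition $\Phi_F(w)=\max_{x\in X(w)}\Phi_I(x)$ yields $\Phi_F(w)\geq\Phi_I(x')$, and chaining the two inequalities delivers the claim. The main obstacle here is purely bookkeeping rather than conceptual: one must correctly pin down $P$ as a function of the given $w$, verify that this $P$ is admissible in the enumeration hypothesis of condition~2, and recognize that the associated $x'$ coincides with the canonical interdiction restriction $\hat{x}_i(1-w_i)$ that already connects $\Phi_I$ to $\Phi_F$.
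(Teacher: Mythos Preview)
Your proof is correct and follows essentially the same route as the paper's: fix $w\in W$, set $P=\{i\in S(\hat{x}):w_i=1\}$, invoke Condition~2 on this $P$ to bound $\Phi_I(\hat{x})-\sum_{i\in P}\tilde d_i$ by $\Phi_I(x')$ with $x'_i=\hat{x}_i(1-w_i)$, and then use $x'\in X(w)$ to conclude $\Phi_F(w)\ge\Phi_I(x')$; domination is handled via Condition~1 exactly as you do. The only cosmetic difference is ordering (the paper states domination last, you first).
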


\begin{proof}
Let $\hat{x}\in X$ be a feasible interdiction strategy, 
$\tilde{d}\in \mathbb{R}^n$ be a vector satisfying 
Conditions~1 and 2 above, and $w\in W$ be any feasible 
fortification strategy. Define $P=\{i\in N:\hat{x}_i=1, w_i=1\} \subseteq S(\hat{x})$, which clearly satisfies the description in Condition 2.
In addition, consider an interdiction strategy $x^\prime$ such that $x^\prime_i=\hat{x}_i(1-w_i)$ for all $i \in N$. 
By definition of $P$ and using Condition 2, we get
\begin{equation}
\Phi_I(\hat{x}) - \sum_{i\in N} \tilde{d}_i \hat{x}_i w_i =
\Phi_I(\hat{x}) - \sum_{i\in P} \tilde{d}_i \leq \Phi_I(x^\prime) \leq \theta, \label{eq:lift_enum_proof}
\end{equation}
where the last inequality follows from the fact that $\theta\geq \Phi_F(w)\geq \Phi_I(x^\prime) $ since $x^\prime \in X(w)$, as in the proof of Theorem \ref{theo:fort_cut}.
Thus, 
\KT{\eqref{eq:liftEnum} is valid} since \eqref{eq:lift_enum_proof} holds for all $w\in W$.

Finally, Condition 1 ensures that cut \eqref{eq:liftEnum} dominates
its counterpart \eqref{eq:fort_cut} associated with 
the same interdiction strategy $\hat{x}$.
\end{proof}
 
\KT{
The theorem above provides conditions for determining stronger cuts. While the trivial choice of $\tilde{d}=d$ is always feasible with respect to these conditions, a tighter cut could potentially be obtained by finding a vector $\tilde{d}$ which is minimal with respect to some norm. We note that for a given interdiction strategy $\hat{x}$, checking 
\MM{whether a given vector $\tilde{d}$ satisfies to Condition 2}
may be computationally challenging. Indeed, one has to evaluate 
the recourse value of the attacker solution 
$S(\hat{x}) \setminus P$ for each $P\subseteq S(\hat{x})$ whose elements can be contained in a feasible fortification strategy. In Section~\ref{section:lifting details}, we describe a heuristic approach to efficiently obtain a non-trivial vector $\tilde{d}$ satisfying the conditions of Theorem \ref{th:enum}. 
}


\begin{remark}
The right-hand-side of Condition 2 in Theorem \ref{theo:liftEnum} can be
replaced by 
$\min\{\Phi_I(\hat{x}) - \underline{\Phi}_I(x^\prime), \sum_{i\in P} d_i\}$, where $\underline{\Phi}_I(x^\prime)$ is a lower bound on $\Phi_I(x^\prime)$. 
For the problems whose recourse level is computationally
hard to solve, a dual bound could be used to produce a 
faster \KT{strengthening} procedure. 
Although the resulting cut may be weaker than the one obtained using $\Phi_I(x^\prime)$, it is still as good as \eqref{eq:fort_cut} since $\tilde{d}_i \leq d_i$, for all $i\in N$. 
\label{remark_liftEnum}
\end{remark}

\subsection{\KT{Strengthening} the Fortification Cuts with a Lower Bound}
\label{section:Lift_LB}

In this section we show an alternative and computationally less expensive 
way to obtain \KT{stronger} fortification cuts. The theorem below 
assumes that a valid lower bound on $z^*$ is available. For $\alpha \in \mathbb{R}$, we define $(\alpha)^+= \max\{\alpha, 0\}$.

\begin{theorem}\label{th:lower}
Let $\hat{x}\in X$ be a feasible interdiction
strategy and $\underline{z}$ be a lower bound on $z^*$. Then, the inequality
\begin{equation}
    \theta \geq \Phi_I(\hat{x}) - \sum_{i\in N}  \min\{(\Phi_I(\hat{x}) - \underline{z})^+, d_i\} \hat{x}_i w_i
    \label{eq:liftBound}
\end{equation}
is valid for \eqref{eq:P1} and dominates the fortification cut~\eqref{eq:fort_cut} for interdiction strategy $\hat{x}$.
\end{theorem}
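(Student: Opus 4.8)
The plan is to prove Theorem~\ref{th:lower} by the same device used for Theorem~\ref{theo:fort_cut}: I fix an arbitrary feasible fortification $w\in W$ and show that the right-hand side of \eqref{eq:liftBound} is bounded above by $\Phi_F(w)$, which suffices because any feasible $(w,\theta)$ satisfies $\theta\ge\Phi_F(w)$. Writing $\tilde d_i=\min\{(\Phi_I(\hat x)-\underline{z})^+,d_i\}$, the claim that \eqref{eq:liftBound} dominates \eqref{eq:fort_cut} is immediate, since $\tilde d_i\le d_i$ and $\hat x_i w_i\ge 0$ make the right-hand side of \eqref{eq:liftBound} no smaller than that of \eqref{eq:fort_cut}; this is the same observation as Condition~1 of Theorem~\ref{theo:liftEnum}. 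For validity I introduce $P=\{i\in N:\hat x_i w_i=1\}$ and the restricted interdiction $x'_i=\hat x_i(1-w_i)$, so that $\sum_{i\in N}\tilde d_i\hat x_i w_i=\sum_{i\in P}\tilde d_i$ and the target inequality becomes $\Phi_F(w)\ge \Phi_I(\hat x)-\sum_{i\in P}\tilde d_i$.

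The key idea is to bound $\Phi_F(w)$ from below in two independent ways. Since $x'\le\hat x$ and $x'\le 1-w$ give $x'\in X(w)$, the argument in the proof of Theorem~\ref{theo:fort_cut} yields $\Phi_F(w)\ge \Phi_I(x')$. Independently, because $z^*=\min_{\bar w\in W}\Phi_F(\bar w)$, the chosen $w$ satisfies $\Phi_F(w)\ge z^*\ge\underline{z}$. It is precisely this second bound that separates Theorem~\ref{th:lower} from a direct application of Remark~\ref{remark_liftEnum}: $\underline{z}$ is a lower bound on the \emph{defender} value $\Phi_F(w)$, not on $\Phi_I(x')$, and indeed $\Phi_I(x')$ may fall strictly below $\underline{z}$ (e.g.\ when $w$ fortifies all of $S(\hat x)$, so that $x'=0$ and $\Phi_I(x')$ is the nominal recourse cost). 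Combining the two bounds gives $\theta\ge\Phi_F(w)\ge\max\{\Phi_I(x'),\underline{z}\}$, so it remains to show $\Phi_I(\hat x)-\sum_{i\in P}\tilde d_i\le\max\{\Phi_I(x'),\underline{z}\}$.

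To close the estimate I would use two elementary facts. First, the ``damage'' bound $\Phi_I(\hat x)-\Phi_I(x')\le\sum_{i\in P}d_i$ is exactly the inequality established inside the proof of Theorem~\ref{theo:fort_cut}. Second, for any $A\ge 0$ and nonnegative $d_i$ one has $\sum_{i\in P}\min\{A,d_i\}\ge\min\{A,\sum_{i\in P}d_i\}$, proved by splitting on whether $\sum_{i\in P}d_i\le A$. Taking $A=(\Phi_I(\hat x)-\underline{z})^+$ and using the identity $\Phi_I(\hat x)-(\Phi_I(\hat x)-\underline{z})^+=\min\{\underline{z},\Phi_I(\hat x)\}$,
\[
\Phi_I(\hat x)-\sum_{i\in P}\tilde d_i
\le \Phi_I(\hat x)-\min\bigl\{(\Phi_I(\hat x)-\underline{z})^+,\ \textstyle\sum_{i\in P}d_i\bigr\}
= \max\bigl\{\min\{\underline{z},\Phi_I(\hat x)\},\ \Phi_I(\hat x)-\textstyle\sum_{i\in P}d_i\bigr\}.
\]
The first entry of the outer maximum is at most $\underline{z}$, and the damage bound shows the second is at most $\Phi_I(x')$; hence the whole expression is at most $\max\{\Phi_I(x'),\underline{z}\}\le\Phi_F(w)\le\theta$, which proves validity. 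The main obstacle is conceptual rather than computational: one must recognize that the extra lower bound to inject is the global $\underline{z}\le\Phi_F(w)$ on the defender's value (not a bound on $\Phi_I(x')$), and then carry out the min-of-sums bookkeeping so that the two competing regimes --- the budget term $\sum_{i\in P}d_i$ and the gap $\Phi_I(\hat x)-\underline{z}$ --- recombine into exactly the coefficient $\tilde d_i$ appearing in \eqref{eq:liftBound}.
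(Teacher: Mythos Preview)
Your proof is correct, but it takes a longer route than the paper's. The paper argues by a simple case split on the set $L=\{i\in N:\hat x_i=1,\ \Phi_I(\hat x)-\underline{z}<d_i\}$ of indices where the coefficient is actually reduced. If $w_i=0$ for all $i\in L$, the inequality coincides with the original fortification cut \eqref{eq:fort_cut} and validity follows from Theorem~\ref{theo:fort_cut}. Otherwise, a single index $i\in L$ with $w_i=1$ already contributes $(\Phi_I(\hat x)-\underline{z})^+$ to the sum, so the right-hand side drops to at most $\underline{z}\le\Phi_F(w)$; no further estimate is needed.

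Your approach, by contrast, does not split on $L$ but instead combines the two lower bounds $\Phi_F(w)\ge\Phi_I(x')$ and $\Phi_F(w)\ge\underline{z}$ into $\Phi_F(w)\ge\max\{\Phi_I(x'),\underline{z}\}$, and then uses the elementary inequality $\sum_{i\in P}\min\{A,d_i\}\ge\min\{A,\sum_{i\in P}d_i\}$ together with the damage bound $\Phi_I(\hat x)-\Phi_I(x')\le\sum_{i\in P}d_i$ established in the proof of Theorem~\ref{theo:fort_cut}. This is a valid and self-contained argument, and it makes the interplay between the two regimes (the gap $\Phi_I(\hat x)-\underline{z}$ versus the total depreciation $\sum_{i\in P}d_i$) explicit, but it is more work than the paper's observation that one strengthened term already suffices.
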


\begin{proof}
Since $\underline{z}$ is a lower bound on $z^*=\min_{ w\in W}  \Phi_F(w)$, we have $\Phi_F(w)\geq \underline{z}$, for all $w\in W$. 
Let $L=\{i\in N : \hat{x}_i=1, \Phi_I(\hat{x}) - \underline{z} <d_i\}$ be the set of indices for which the 
coefficient in fortification cut~\eqref{eq:liftBound} is smaller than in \eqref{eq:fort_cut}.
It is clear that \eqref{eq:liftBound} is equivalent to $\eqref{eq:fort_cut}$ if $w_i=0$ for all $i\in L$.
Otherwise, \begin{equation}
    \Phi_I(\hat{x}) - \sum_{i\in N}  \min\{(\Phi_I(\hat{x}) - \underline{z})^+, d_i\} \hat{x}_i w_i \leq \Phi_I(\hat{x}) - (\Phi_I(\hat{x}) - \underline{z})^+ \leq \underline{z},
    \label{eq:liftLB-proof}
\end{equation} 
which leads to $\Phi_F(w) \geq \underline{z} \geq \Phi_I(\hat{x}) - \sum_{i\in N}  \min\{(\Phi_I(\hat{x}) - \underline{z})^+, d_i\} \hat{x}_i w_i$. 
\end{proof}

\begin{remark}
Notice that $z^* \geq \min_{y\in Y} \, c^T y$, i.e., the objective value of the fortification problem cannot be smaller \KT{than} the recourse objective value \KT{in case of no interdiction}. Thus, a valid lower bound on $z^*$ to be used for deriving \KT{strengthened} cuts \eqref{eq:liftBound} 
can be computed as $\underline{z}=\min_{y\in Y} \, c^T y$.  
\label{remark_liftLB}
\end{remark}

The following corollary shows that the results provided in Theorems \ref{th:enum} and \ref{th:lower}
above can be combined to obtain 
a valid cut. It follows from the validity of \eqref{eq:liftEnum} and inequality \eqref{eq:liftLB-proof}.

\begin{corollary}
	Let $\hat{x}\in X$ be a feasible interdiction strategy,
	$\tilde{d}\in \mathbb{R}^n$ be a vector satisfying the 
	conditions in Theorem \ref{theo:liftEnum}, 
	and $\underline{z}$ be a lower bound on $z^*$. 
	Then, the inequality 
	\begin{equation}
	\theta \geq \Phi_I(\hat{x}) - \sum_{i\in N}  \min\{(\Phi_I(\hat{x}) - \underline{z})^+, \tilde{d}_i\} \hat{x}_i w_i
	\end{equation}
	is valid for 
	\eqref{eq:P1}. 
	\label{theo:combined_lifting}
\end{corollary}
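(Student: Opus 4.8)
The plan is to fix an arbitrary feasible pair $(w,\theta)$, meaning $w\in W$ and $\theta\geq \Phi_F(w)$, and to verify the combined inequality for it; since the pair is arbitrary, validity for \eqref{eq:P1} follows. I would write $a:=(\Phi_I(\hat{x})-\underline{z})^+$ so that the combined coefficient is $c_i:=\min\{a,\tilde{d}_i\}$, which interpolates between the enumeration coefficient $\tilde{d}_i$ of Theorem~\ref{theo:liftEnum} and the lower-bound coefficient $a$ of Theorem~\ref{th:lower}. The natural strategy is to replay the proof of Theorem~\ref{th:lower} with $\tilde{d}_i$ in the role of $d_i$ and with the stronger cut \eqref{eq:liftEnum} replacing the basic cut \eqref{eq:fort_cut} used there.

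Concretely, I would introduce the index set $L:=\{i\in N:\hat{x}_i=1,\ a<\tilde{d}_i\}$ on which the combined coefficient is strictly smaller than $\tilde{d}_i$, i.e. $c_i<\tilde{d}_i$, and split into two cases. If $w_i=0$ for every $i\in L$, then $c_i=\tilde{d}_i$ for every $i$ with $\hat{x}_iw_i=1$, so the right-hand side of the combined cut equals that of \eqref{eq:liftEnum}; the desired bound $\theta\geq \Phi_I(\hat{x})-\sum_{i\in N}\tilde{d}_i\hat{x}_iw_i$ then holds because \eqref{eq:liftEnum} is valid by Theorem~\ref{theo:liftEnum}. If instead $w_{i_0}=1$ for some $i_0\in L$, I would reproduce inequality \eqref{eq:liftLB-proof} with $\tilde{d}$ in place of $d$: the term indexed by $i_0$ contributes exactly $a$, and after discarding the remaining (nonnegative) terms one gets $\Phi_I(\hat{x})-\sum_{i\in N}c_i\hat{x}_iw_i\leq \Phi_I(\hat{x})-a\leq \underline{z}\leq \Phi_F(w)\leq \theta$, where $\Phi_F(w)\geq \underline{z}$ holds because $\Phi_F(w)\geq z^*\geq\underline{z}$.

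The only step needing care is the nonnegativity of the coefficients $c_i$ used when discarding terms in the second case, since Condition~1 of Theorem~\ref{theo:liftEnum} only supplies $\tilde{d}_i\leq d_i$. I would resolve this by noting that every index contributing to the sum satisfies $\hat{x}_iw_i=1$, so the present $w$ itself witnesses that the singleton $P=\{i\}$ admits a feasible fortification; Condition~2 then yields $\tilde{d}_i\geq \Phi_I(\hat{x})-\Phi_I(x')\geq 0$, the last inequality because deleting the interdiction of $i$ cannot increase the attacker value $\Phi_I$. Hence $a\geq 0$ gives $c_i=\min\{a,\tilde{d}_i\}\geq 0$ for all relevant $i$, closing the gap. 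Beyond this bookkeeping no fresh optimization argument is required: the corollary is essentially the superposition of Theorems~\ref{theo:liftEnum} and~\ref{th:lower}, exactly as anticipated by the derivation from the validity of \eqref{eq:liftEnum} and inequality \eqref{eq:liftLB-proof}.
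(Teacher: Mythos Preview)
Your proof is correct and follows essentially the same route as the paper, which merely states that the corollary ``follows from the validity of \eqref{eq:liftEnum} and inequality \eqref{eq:liftLB-proof}.'' You replay the case split from the proof of Theorem~\ref{th:lower} with $\tilde{d}_i$ in place of $d_i$ and with \eqref{eq:liftEnum} in place of \eqref{eq:fort_cut}, exactly as intended. Your additional verification that $\tilde{d}_i\geq 0$ for the relevant indices---via Condition~2 applied to singletons $P=\{i\}$ witnessed by the current $w$, together with the monotonicity $\Phi_I(x')\leq \Phi_I(\hat{x})$---is a point the paper leaves implicit but which is indeed needed to discard terms in the second case.
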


%

\begin{remark}

When $\Phi_I(\hat{x}) - \underline{z}$ is small compared to \KT{the} original cut coefficient $d$, it is likely that most of the final coefficients after combined \KT{strengthening} will be equal to $\Phi_I(\hat{x}) - \underline{z}$, which renders the effort to initially compute $\tilde{d}_i$ values unnecessary. In this case, one could opt for the lower bound based \KT{strengthening} which can be defined in constant time if $\underline{z}$ is available. 
\end{remark}

\subsection{Separation of Fortification Cuts}
\label{section:separation}

As already mentioned, our approach for solving the problem \eqref{eq:P1} is a branch-and-cut algorithm 
based on reformulation 
\eqref{eq:fortification-obj}
with (\KT{strengthened}) fortification cuts \eqref{eq:fort_cut}.
At each node of the branch-and-cut tree, 
valid inequalities are added on-the-fly, when 
violated, to ensure correctness of the algorithm, improve the dual bound at the node, and
possibly allow fathoming. 
Although in principle one could add to the formulation 
{\em any} valid violated inequality, a cut that is
maximally violated is typically sought. 
Thus, given a solution, say $(w^*,\theta^*)$, for the linear programming (LP)-relaxation
of the current model, one is required to solve the separation problem, \KT{which is an IG,}
\begin{equation}\label{eq:thetaF}
\Phi_F(w^*) = \max_{x \in X(w^*)}\, \min_{y\in Y} \left\{
c^T y + \sum_{i\in N} d_i x_i y_i \right\}
\end{equation}
and check whether $\Phi_F(w^*) > \theta^*$ or not.
Notice that this 
separation problem itself is a mixed-integer bilevel linear program, which is at least NP-hard and \KT{possibly} even $\Sigma_2^P$-hard \citep{caprara2014study} in case of an integer recourse problem.
%
Separation problem 
\eqref{eq:thetaF} can be reformulated 
as follows \KT{using Benders decomposition 
as done by \citet{israeli1999system}}.
\begin{align}
\text{(SEP)} \qquad
\Phi_F(w^*) = \max_{x,t} \quad & t \\
& t \leq  c^T \hat{y} + \sum_{i\in N}d_{i} \hat{y}_{i} x_{i}
\hspace*{5ex} \forall \hat{y} \in \hat{Y} \label{eq:interCut} \\
& \sum_{i\in N}g_i x_i\leq B_I  \label{eq:SepBudget}\\
& x_{i} \leq 1-w^*_{i} \hspace*{13ex} \forall i\in N  \label{eq:SepFortification}\\
&  x\in \{0,1\}^{n}\label{eq:SepBinary}
\end{align} 
Here $\hat Y$ denotes the set of extreme points of the convex hull of the feasible solutions of the recourse problem, see, e.g., \citet{wood2010bilevel,fischetti2019interdiction}.
In our generic framework, we solve (SEP) using a branch-and-cut scheme in which we add violated 
\emph{interdiction cuts}~\eqref{eq:interCut} 
on the fly, for both integer and fractional values of $x$ variables. 
Solving the separation problem 
yields a violated fortification cut 
if $\Phi_F(w^*) > \theta^*$, while
one can conclude that $(w^*,\theta^*)$ is a feasible 
solution to \eqref{eq:reform_obj}--\eqref{eq:reform_feas} otherwise.

\paragraph{\KT{Strengthening} the interdiction cut} 
Given the computational complexity of the separation problem (SEP), one may be interested in determining
a violated, but not necessarily maximally violated fortification cut. To this aim, it is enough to find a feasible solution to (SEP) 
having an objective value strictly larger than $\theta^*$. 
Hence, we propose to reformulate the separation problem by imposing 
an \emph{artificial} upper bound, a real number $\ell > \theta^*$, on its optimal objective value.  To this end, the interdiction cuts \eqref{eq:interCut} can be \KT{strengthened} as shown in the following theorem.

 \begin{theorem}
 Given a solution $(w^*,\theta^*)$ and a real number $\ell > \theta^*$, consider the following formulation: 
 \begin{align}
\text{(SEP-L)} \qquad 
 \Phi^\ell_F(w^*) = \max_{x,t} \quad & t \notag \\
&  t \leq c^T\hat{y} + \sum_{i\in N}\min\{(\ell - c^T\hat{y})^+, d_i\} \hat{y}_{i} x_{i}  
\hspace*{5ex} \forall \hat{y} \in \hat{Y} \label{eq:lifted_interCut} \\
& \eqref{eq:SepBudget}-\eqref{eq:SepBinary} \notag
\end{align}    
Let $(x^*,t^*)$ be an optimal solution to (SEP-L). If $\Phi^\ell_F(w^*) > \theta^*$, then $\Phi_F(w^*) > \theta^*$ 
and $x^*$ gives a fortification cut \eqref{eq:fort_cut} which is violated at $(w^*,\theta^*)$. Otherwise, $(w^*,\theta^*)$ is a feasible solution to \eqref{eq:P1}.
 \end{theorem}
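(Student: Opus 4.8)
The plan is to relate the strengthened separation value $\Phi^\ell_F(w^*)$ to the exact one $\Phi_F(w^*)$ through their inner minimizations. For a fixed interdiction vector $x$, optimizing $t$ in (SEP) gives $\Phi_I(x)=\min_{\hat y\in\hat Y}\{c^T\hat y+\sum_{i\in N}d_i\hat y_i x_i\}$, while optimizing $t$ in (SEP-L) gives $\Phi^\ell_I(x):=\min_{\hat y\in\hat Y}\{c^T\hat y+\sum_{i\in N}\min\{(\ell-c^T\hat y)^+,d_i\}\hat y_i x_i\}$, so that $\Phi_F(w^*)=\max_{x\in X(w^*)}\Phi_I(x)$ and $\Phi^\ell_F(w^*)=\max_{x\in X(w^*)}\Phi^\ell_I(x)$. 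Since $\min\{(\ell-c^T\hat y)^+,d_i\}\le d_i$ for every $i$ and $\hat y$, I first obtain the pointwise bound $\Phi^\ell_I(x)\le\Phi_I(x)$ and hence the monotonicity $\Phi^\ell_F(w^*)\le\Phi_F(w^*)$. This settles the first half of the ``if'' branch at once, because $\Phi^\ell_F(w^*)>\theta^*$ then forces $\Phi_F(w^*)>\theta^*$.

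For the remaining claim in the ``if'' branch, I would show that $x^*$ induces a violated fortification cut. Because $x^*$ satisfies the linking constraints~\eqref{eq:SepFortification}, i.e. $x^*_i\le 1-w^*_i$ with $x^*\in\{0,1\}^n$ and $w^*\ge 0$, one has $x^*_iw^*_i=0$ for all $i$, so the fortification cut~\eqref{eq:fort_cut} for $\hat x=x^*$ collapses to $\theta\ge\Phi_I(x^*)$ at $w=w^*$. It then suffices to verify $\Phi_I(x^*)>\theta^*$, which follows from the chain $\Phi_I(x^*)\ge\Phi^\ell_I(x^*)=t^*=\Phi^\ell_F(w^*)>\theta^*$.

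The delicate part is the ``otherwise'' branch, where I must upgrade $\Phi^\ell_F(w^*)\le\theta^*$ to feasibility of $(w^*,\theta^*)$, i.e.\ $\Phi_F(w^*)\le\theta^*$; here the monotonicity points the wrong way, so I would instead prove the contrapositive: if $\Phi_F(w^*)>\theta^*$ then $\Phi^\ell_F(w^*)>\theta^*$. Fixing an optimal interdiction $x^o\in X(w^*)$ with $\Phi_I(x^o)=\Phi_F(w^*)>\theta^*$, the goal is to lower-bound $\Phi^\ell_I(x^o)$ by showing that every extreme point $\hat y\in\hat Y$ contributes a row value strictly above $\theta^*$ to the inner minimization, via a case split on whether the truncation is active. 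If $c^T\hat y\ge\ell$, the base term already exceeds $\ell>\theta^*$. If the cap binds at some active index $i$ with $\hat y_ix^o_i=1$ and $\ell-c^T\hat y<d_i$, then that summand equals $\ell-c^T\hat y$, and since the other summands are nonnegative the row value is at least $c^T\hat y+(\ell-c^T\hat y)=\ell>\theta^*$. Finally, if no cap binds on the active indices, the capped coefficients coincide with $d_i$ there, and the row value equals the (SEP) value $c^T\hat y+\sum_{i\in N}d_i\hat y_ix^o_i\ge\Phi_I(x^o)>\theta^*$. As $\hat Y$ is finite, the minimum over $\hat y$ stays strictly above $\theta^*$, yielding $\Phi^\ell_F(w^*)\ge\Phi^\ell_I(x^o)>\theta^*$.

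I expect this last case analysis---checking that the truncation to $\min\{(\ell-c^T\hat y)^+,d_i\}$ can never pull a row value down to $\theta^*$ or below while $\ell>\theta^*$---to be the main obstacle. The monotonicity argument and the $x^*_iw^*_i=0$ simplification from the linking constraints are routine by comparison, as is the deduction of feasibility in the definition~\eqref{eq:reform_feas}.
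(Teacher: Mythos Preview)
Your proposal is correct and follows essentially the same approach as the paper: both use the pointwise monotonicity $\Phi^\ell_F(w^*)\le\Phi_F(w^*)$ for the ``if'' branch, and for the ``otherwise'' branch both rely on the observation that whenever a (SEP-L) row value falls at or below $\theta^*<\ell$, the cap cannot be active on any index with $\hat y_ix_i=1$, so the (SEP-L) and (SEP) rows coincide there. The only cosmetic difference is that you argue the second part via the contrapositive with an explicit three-case split, while the paper states it directly and leaves that case analysis implicit in the phrase ``\eqref{eq:lifted_interCut} and \eqref{eq:interCut} are identical at such $\hat y$ and $x$ pairs.''
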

\begin{proof}
\KT{

Observe that the problem (SEP-L) is always feasible and bounded, hence there exists an optimal solution $(x^*,t^*)$ with the associated objective value $\Phi^\ell_F(w^*)$, and $t^*=\Phi^\ell_F(w^*) \le \Phi_F(w^*)$. Therefore, if $\Phi^\ell_F(w^*) > \theta^*$, then $\Phi_F(w^*) > \theta^*$, and $x^*$ gives a violated cut since $\Phi(x^*)\geq \Phi^\ell_F(w^*)$. 
Otherwise, i.e., if $\Phi^\ell_F(w^*) \le \theta^*$, then there exists some $\hat{y}$ for each $x\in X(w^*)$ that makes the RHS of \eqref{eq:lifted_interCut} less than or equal to $\theta^*$. Moreover, \eqref{eq:lifted_interCut} and \eqref{eq:interCut} are identical at such $\hat{y}$ and $x$ pairs, which means $\Phi_F(w^*) =\Phi^\ell_F(w^*)\le \theta^*$. 
In this case there exists no violated cut at $(w^*,\theta^*)$.
}

\end{proof}

In other words, this \KT{strengthened} interdiction cut may project the true objective value of the recourse problem to a smaller value, but if the optimal objective value $\Phi_F(w^*)$ of (SEP) is strictly greater than $\theta^*$, then the optimal objective value $\Phi^\ell_F(w^*)$ of (SEP-L) is also greater than $\theta^*$. As a result, if (SEP-L) yields a 
solution $(\hat{x},\hat{t})$ with $\hat{t}> \theta^*$, then $\hat{t}$ could be less than $\Phi_I(\hat{x})$ which is the constant part of the fortification cut \eqref{eq:fort_cut}. Thus, the recourse problem should be solved for $\hat{x}$ one more time to obtain $\Phi_I(\hat{x})$. Note that  \cite{lozano2017backward} also propose a \KT{strengthening} procedure for interdiction cuts, however using a global upper bound on $z^*$.
Although this approach is valid in our case too, our \KT{strengthening} is more aggressive since $\ell$ is not necessarily an upper bound, but is treated as one.

In Section \ref{section:separationProcedure}, we propose various additional strategies to speed up the separation procedure.

\section{Algorithmic Details for an Efficient Implementation}\label{sec:algdetails}

In this section we address the most important algorithmic elements necessary for an  efficient implementation of our solution framework.

\subsection{Initialization}

We set an initial lower bound on $\theta$ by solving the recourse problem with no interdiction as mentioned in Remark \ref{remark_liftLB}, i.e., we add the constraint $\theta\geq \min_{y\in Y} \, c^T y$ to our model.

\KT{

We also initialize our model with a set of initial fortification cuts.
These cuts are produced using the interdiction strategies computed by Algorithm \ref{alg:InitialCuts}. This algorithm takes an optimal recourse solution $y^0$ in the absence of interdiction as input, and iterates over the assets used in this solution. For each such asset $i$, an interdiction strategy is obtained via first interdicting $i$, then deciding the remaining interdiction actions in a greedy fashion. In the greedy part, we pick one element with maximum depreciation-cost ratio among all those that can still be added to the current interdiction strategy without exceeding the interdiction budget. Notice that the number of initial cuts, i.e., the interdiction strategies that Algorithm \ref{alg:InitialCuts} generates, depends on the support size of the initial recourse solution $y^0$.

\begin{algorithm}[h]
\caption{\texttt{InitialFortificationCuts}}
\textbf{Input:}  An optimal recourse solution $y^0$ in case of no interdiction \\
\textbf{Output:} A set of initial fortification cuts \eqref{eq:fort_cut}
\begin{algorithmic} [1]
\FOR {each $i\in N: y^0_i=1$}
\STATE Initialize the 
interdiction strategy $\hat{x}\leftarrow e_i$, define $\hat{B}_I  \leftarrow B_I-g_i$
\STATE Initialize the candidate set for interdiction $R_N\leftarrow N\setminus\{i\}$
\WHILE{$R_N\neq \emptyset$}
\STATE Solve the recourse problem for $\hat{x}$ to obtain an optimal solution $y^*$ \label{alg:gi:recourse}
\STATE Obtain the new candidate set $R_N=\{i\in N:  \hat{x}_i=0 ,\, y^*_i=1, g_i\leq \hat{B}_I\}$ 
\IF{$R_N\neq \emptyset$}
\STATE Select $i^*=\arg \max_{i\in R_N} \frac{d_i}{g_i}$
\STATE Set $\hat{x}_{i^*}\leftarrow 1$, and update the budget $\hat{B}_I \leftarrow \hat{B}_I- g_{i^*}$
\ENDIF
\ENDWHILE
\STATE {Generate a fortification cut \eqref{eq:fort_cut} for $\hat{x}$}
\ENDFOR
\end{algorithmic}
\label{alg:InitialCuts}
\end{algorithm}

}

\subsection{On Implementing the Separation of Fortification Cuts}
\label{section:separationProcedure}

We solve the formulation \eqref{eq:fortification-obj}-\eqref{eq:fortification} by means of a branch-and-cut algorithm, in which \KT{feasible} solutions $(w^*,\theta^*)$ of the relaxed master problem \KT{such that $w^*$ is integer} are separated on the fly.
Let $(w^*,\theta^*)$ be a feasible solution of the relaxed master problem at the current branching node, e.g., the  solution of the LP-relaxation at the current node or a heuristic solution, \KT{where $w^*$ is integer}.
Generating a violated fortification cut associated to $(w^*,\theta^*)$ (if such exists) corresponds to finding $\hat{x}\in X(w^*)$ such that $\Phi_I(\hat{x})>\theta^*$. 

\paragraph{Greedy Integer Separation}
To search for such $\hat x$, we first invoke the greedy heuristic described in Algorithm \ref{alg:GreedyInterdiction}.
This procedure iteratively defines an interdiction strategy (initially $\hat{x} = \Vec{0}$) and 
a candidate set for interdiction (initially including all assets). At each iteration, the recourse problem 
associated with the currently interdicted assets is solved, the candidate set is updated accordingly, 
and an asset is selected and added to the interdiction strategy. The procedure ends when the candidate set for
interdiction is empty, meaning that no asset that has been selected in the recourse strategy can be added to the interdiction strategy.
\begin{algorithm}[h]
\caption{\texttt{GreedyInterdiction}}
\textbf{Input:}  Current solution $(w^*,\theta^*)$ of the relaxed master problem \\
\textbf{Output:} $\hat{x}\in X(w^*)$, possibly giving a violated 
cut~\eqref{eq:fort_cut} 
\begin{algorithmic} [1]
\STATE Initialize the 
interdiction strategy $\hat{x}\leftarrow \Vec{0}$ and 
remaining interdiction budget 
$\hat{B}_I\leftarrow B_I$
    \STATE Initialize the candidate set for interdiction $R_N\leftarrow N$
\WHILE{$R_N\neq \emptyset$}
\STATE Solve the recourse problem for $\hat{x}$ to obtain an optimal solution $y^*$ \label{alg:gi:recourse}
\STATE Obtain the new candidate set $R_N=\{i\in N:  \hat{x}_i=0 ,\, y^*_i=1, g_i\leq \hat{B}_I\}$ 
\IF{$R_N\neq \emptyset$}
\STATE Select $i^*\in \arg \max_{i\in R_N} \frac{d_i(1-w^*_i)}{g_i}$
\STATE Set $\hat{x}_{i^*}\leftarrow 1$, and update the budget $\hat{B}_I \leftarrow \hat{B}_I- g_{i^*}$
\ENDIF
\ENDWHILE
\STATE Return $\hat{x}$
\end{algorithmic}
\label{alg:GreedyInterdiction}
\end{algorithm}

If this \KT{\texttt{GreedyInterdiction}} heuristic does not produce 
a violated cut, the separation problem (SEP) described in Section \ref{section:separation}
is solved by means of a branch-and-cut algorithm. We now present some strategies that are used
to improve the performance when solving the model (SEP).

\paragraph{Solution Limit as a Stopping Criterion}
As already observed, any feasible solution to (SEP) with an objective value larger 
than $\theta^*$ yields a violated cut. Accordingly, one can avoid the computation of a maximally violated cut, and halt the execution of the 
enumerative algorithm for (SEP) as soon as the incumbent objective value becomes larger than $\theta^*$. In general, a solution limit $s_{max}\geq 1$ can be imposed to prematurely terminate the solution of (SEP) once at least $s_{max}$ incumbent solutions with an objective value larger than $\theta^*$ are found. 

 \paragraph{Setting a Lower Cutoff Value} Another strategy that could be helpful in solving (SEP) is to set the lower cutoff value of the used MIP solver to \KT{$\theta^*+\epsilon$ where $\epsilon$ is a small constant}. In this way, one can prune all the nodes with an upper bound less than \KT{$\theta^*+\epsilon$}, i.e., nodes that cannot produce a violated interdiction cut.
 \KT{If there is no $x\in X(w^*)$ with $\Phi_I(x) \geq \theta^*+\epsilon$, then (SEP) becomes infeasible due to the lower cutoff value, which can be considered as an indicator of bilevel feasiblity of $(w^*,\theta^*)$ since $X(w)\neq \emptyset$ for any value of $w$. In that case, no cuts are added. 
 With this lower cutoff strategy, whenever the incumbent fortification strategy is updated, the associated interdiction strategy is not readily available. Therefore, when an optimal/incumbent solution $(w^*, \theta^*)$ to \eqref{eq:P1} is found, it is possible to obtain the optimal attacker response by iterating over all the fortification cuts added to the master problem, i.e., previously obtained interdiction strategies $\hat{x}$. Any $\hat{x}\in X(w^*)$ with $\Phi_I(\hat{x})=\theta^*$ is an optimal response to $w^*$. If none is found, the attacker problem is solved optimally for $w^*$.}
 %



While solving (SEP), both integer and fractional $x^*$ are separated. The lower cutoff and the solution limit strategies described above are employed to speed up the separation process. Based on our preliminary experiments, we set the solution limit parameter $s_{max}$ to one.
%
Finally, all feasible solutions to the recourse problem obtained after solving (SEP) in previous iterations are stored and added as initial cuts 
of the form \eqref{eq:interCut} in future solving attempts. 

We remark that  the same enhancements can be applied to the alternative separation  model (SEP-L) presented in Section \ref{section:separation}.

\subsection{On Implementing the \KT{Strengthening} of the Fortification Cut}
\label{section:lifting details}

\begin{algorithm}[b!]
\caption{\texttt{Enumerative\KT{Strengthening}}}
\textbf{Input}: $\hat{x}\in X$ \\
\textbf{Output}: Valid  coefficients $\tilde{d}$ for the \KT{strengthened} fortification cut \eqref{eq:liftEnum}
\begin{algorithmic}[1]
\STATE Initialize the \KT{strengthened} cut coefficients as $\tilde{d}\leftarrow \Vec{0}$ 
\FORALL{$\emptyset\ne P\subseteq S(\hat{x}) \mbox{ s.t. } \sum_{i\in P} f_i\le B_F$}
\STATE Define attacker solution $x^\prime$ such that $x^\prime_i = 0$ for all $i \in P$, and $x^\prime_i=\hat{x}_i$ otherwise
\STATE Solve the recourse problem to obtain $\Phi_I(x^\prime)$, compute $\Delta_P=\Phi_I(\hat{x}) - \Phi_I(x^\prime)$ \label{alg:el:recourse}
\WHILE{ $\sum_{i\in P}\tilde{d}_i < \Delta_P$}
\STATE Randomly pick $i\in P$ such that $\tilde{d}_i<d_i$, set $\tilde{d}_i\leftarrow \tilde{d}_i + \min \{ d_i-\tilde{d}_i, \Delta_P- \sum_{i\in P}\tilde{d}_i \}$
\ENDWHILE
\IF{$\tilde{d}=d$}
\STATE \textbf{return} $\tilde{d}$
\ENDIF
\ENDFOR
\STATE \textbf{return}  $\tilde{d}$
\end{algorithmic}
\label{alg:EnumLift}
\end{algorithm}


In order to strengthen a fortification cut based on Theorem \ref{theo:liftEnum}, one may solve an LP, e.g., $\min_{ \tilde{d} \in \mathbb{R}^n}$ $\sum_{i\in S(\hat{x})} \tilde{d}_i$ subject to the conditions of Theorem \ref{theo:liftEnum}, after computing all the RHS values appearing in Condition 2. Our preliminary experiments showed that solving this LP was rather time consuming. We thus developed a heuristic method, reported in Algorithm \ref{alg:EnumLift}, which is more time efficient and which produces cuts that are usually very similar to the cuts obtained by solving the LP.
This procedure receives an interdiction strategy $\hat{x}$ as input and returns \KT{strengthened} coefficients $\tilde{d}$ \KT{that satisfy the condition in Theorem \ref{theo:liftEnum}. 

The heuristic works as follows: initially}, all coefficients are set to zero. Then,
all sets $P \subseteq S(\hat{x})$ are considered and, 
for each set, the corresponding attacker solution $x^\prime$ is 
defined. Then, the cost reduction $\Delta_P=\Phi_I(\hat{x})-\Phi_i(x^\prime)$
with respect to the recourse cost for $\hat{x}$ is split among $\tilde{d}$ coefficients, provided these figures satisfy Condition 1 of Theorem \ref{theo:liftEnum}.
Note that unnecessary computations can be avoided by considering only the subsets $P$ such that there is a feasible recourse solution with $y_i=1$, for all $i\in P$, as the combined effect of fortifying any two assets on the recourse objective can be larger than the sum of their individual effects only if they can simultaneously be used in a recourse solution.
In order to do that, one may exploit the structures of the recourse problems, if known. This is explained in more detail 
in Section \ref{section:Results} within the problem specific implementation aspects of the two applications we describe next. 
In addition, any iteration could be halted whenever the 
sum of the current $\tilde{d}$ coefficients is larger than the maximum value $\Delta_P$ could take, i.e., $\Phi_I(\hat{x})- \min_{y\in Y} \, c^T y$.

For what concerns the \KT{strengthening} of fortification cuts based on a lower bound \KT{as described in} Section \ref{section:Lift_LB}, we initialize the \KT{global} lower bound 
as $\underline{z}=\min_{y\in Y} \, c^T y$. \KT{This value is updated} dynamically as the best known lower bound \KT{obtained from the LP relaxation objective value} changes in the branching tree.
Whenever the fortification cut is \KT{strengthened} with $\underline{z}$, the cut is \emph{globally valid} and added as a global cut. 
In addition, 
let $(w^*,\theta^*)$ be an optimal solution of the  LP-relaxation at the current branching node; if $\theta^*> \underline{z}$, \KT{i.e, the lower bound of the current branch is tighter than the global one,} a \emph{locally valid} \KT{and tighter} cut may be derived
by using $\theta^*$ as the \KT{strengthening} bound. 

\section{Applications}
\label{section:applications}
In this section, we present the two applications for which we conduct a numerical study. 
A main difference between the two test-cases is in the computational complexity of the
associated recourse problems; indeed, this turns out to be an NP-hard problem in the first case and a polynomially 
solvable problem in the second one.

\subsection{Knapsack Fortification Game}
\label{section:KIF}

The first case-study we consider is the knapsack fortification game (KFG).
The classical 0-1 knapsack problem (KP) has been widely studied in the literature because of its practical and theoretical
relevance, and because it arises as a subproblem in many more complex problems (see,
e.g., \cite{KPP04li}). In this problem,
we are given a set $N$ of items, the $i$-th item having profit $d_i \ge 0$ and weight $a_i \ge 0$ and a knapsack having capacity  $b$.  
In the three-level fortification version of KP, each item $i$ has associated additional weights $f_i \ge 0$ and $g_i \ge 0$ for the fortification and 
interdiction levels, respectively; in addition, fortification and interdiction strategies are subject to some budget, denoted as 
$B_F >0$ and $B_I > 0$, respectively.
First the defender chooses a set of items to fortify against interdiction while respecting the fortification budget. 
Then, the attacker interdicts some of the non-fortified items 
within the interdiction budget. 
Lastly, the defender determines the recourse action, 
by solving a KP over non-interdicted items, i.e., by choosing a subset of non-interdicted items giving the maximum profit while not exceeding the knapsack capacity. 
The feasible regions of the first and the second levels are denoted by $W=\{w\in \{0,1\}^{n}: \sum_{i\in N} f_i w_{i}\leq B_F\}$ 
and $X(w) = X \cap \{x\in \mathbb{R}^n: x\leq 1-w \}$, respectively, where $X=\{x\in \{0,1\}^{n}:\sum_{i\in N} g_i x_i \leq B_I\}$. 
The tri-level game is thus formulated by the following program:

\begin{align}
z^*= \, \max_{w\in W}\, \min_{x \in X(w)}\,   \max_{y} & \sum_{i\in N}d_{i} y_{i} \\
&  \sum_{i \in N} a_i y_i\leq b \\
&  y_i \leq 1-x_i  \hspace*{1cm} \forall i \in N  \label{eq:KFG_interdiction}\\
&  y_{i} \in \{0,1\} \hspace*{1cm} \forall i \in N
\end{align}
Since the recourse problem satisfies the down-monotonicty assumption \citep{fischetti2019interdiction}, the interdiction constraints \eqref{eq:KFG_interdiction} can be replaced by penalty terms in the objective function with $M_i=d_i$ as explained in Remark \ref{remark_P1}. With necessary transformations for a defender with a maximization objective, the single level formulation of the KFG becomes
\begin{align}
\text{KFG} : \quad  z^*=  \, \max_{w \in W} \quad & \theta \\
& \theta \leq \Phi_I(\hat{x}) + \sum_{i\in N} d_{i}\hat{x}_{i}w_{i} \hspace*{1cm} \forall \hat{x} \in X .
\end{align}

\MS{We note that the KFG is $\Sigma^p_3$-hard already for the special case $d_i=a_i$, for all $i \in N$ as shown by \cite{nabli2020complexity}.}

\subsection{Shortest Path Fortification Game}
\label{section:SPIF}

The second application we consider is the shortest path fortification game (SPFG). 
In this game, also considered by \cite{lozano2017backward}, we are given a directed graph $G=(V,A)$ where $V$ and $A$ denote the set of nodes and arcs, respectively. The first level of the SPFG consists in selecting a set of arcs to protect from the interdiction by an attacker 
in order to minimize the length of the shortest $s-t$ path. 
Then, the attacker interdicts some of the non-fortified arcs, with the goal of maximizing the shortest path in the interdicted graph. 
At a third level, a recourse step is executed by solving a shortest path problem in the interdicted network.
The mathematical formulation of the game is given as
\begin{align}
z^*= \, \min_{w\in W}\, &\max_{x \in X(w)}\,  \min_{y}  \sum_{(i,j)\in A}(c_{ij} + d_{ij} x_{ij}) y_{ij} &&& \\
 \sum_{j:(i,j)\in A} y_{ij} -\sum_{j:(j,i)\in A} y_{ji} & = \begin{cases} 
      1 & i=s \\
      0 & i\in V\setminus \{s,t\} \\
      -1 & i=t 
   \end{cases} && \forall i \in V \\
  y_{ij} & \geq 0 &&  \forall (i,j)\in A,
\end{align}
where $c_{ij}$ denotes the nominal cost of each arc $(i,j) \in A$ and $d_{ij}$ represents the additional cost due to its interdiction. 
The feasible regions of the first and second level problems are described by cardinality constraints, i.e., $W=\{w\in \{0,1\}^{m}: \sum_{(i,j)\in A} w_{ij}\leq B_F\}$, $X=\{x\in \{0,1\}^{m}:\sum_{(i,j)\in A} x_{ij}\leq B_I\}$, and $X(w)=X\cap \{x\in \mathbb{R}^m: x\leq 1-w \}$, where $m$ denotes the the number of arcs.  The fortification game is reformulated by the following single level formulation following the steps described in Section \ref{section:FortCut}.
\begin{align}
\text{SPFG}: \quad
z^*= \, \min_{w\in W} \quad & \theta \\
& \theta \geq \Phi_I(\hat{x}) - \sum_{(i,j)\in A} d_{ij}\hat{x}_{ij}w_{ij} \hspace*{1cm} \forall \hat{x} \in X
\end{align}
where $X$ includes all feasible interdiction patterns. Notice that replacing 
$\Phi_I(\hat{x})$ with ${\sum_{(i,j)\in A}} (c_{ij} + d_{ij} \hat{x}_{ij}) \hat{y}_{ij}$,
where $\hat{y}$ is 
an optimal path under costs $c_{ij} + d_{ij} \hat{x}_{ij}$, gives the following fortification cut:
\begin{equation}
\theta \geq \sum_{(i,j)\in A} \Big( c_{ij} \hat{y}_{ij} + d_{ij}( \hat{y}_{ij}- w_{ij})\hat{x}_{ij} \Big).
\end{equation}
Let $(i,j)$ be an arc such that $\hat{x}_{ij}=1$ and $\hat{y}_{ij}=0$. Then, its contribution to the right-hand-side cost reduces to $-d_{ij}w_{ij}$. This means that if $(i,j)$ 
would be fortified (and thus not interdicted), the shortest path length would be reduced by at most $d_{ij}$.

\section{Computational Results}
\label{section:Results}
In this section we discuss the results of our computational study which has two main objectives.
First, to assess the computational performances of the proposed algorithms by comparison with the existing literature. Second, to
analyze the computational effectiveness of the proposed algorithmic enhancements and determine the limitations of the new approach.
We present computational results on two data sets from the literature for each of the two case-studies
we consider. 
%
Our algorithm has been implemented in C++, and makes use of IBM ILOG CPLEX 12.10 (in its default settings) as mixed-integer linear programming solver. 
All experiment are executed on a single thread of an Intel Xeon E5-2670v2 machine with 2.5 GHz processor and using a memory limit of 8~GB. Unless otherwise indicated, a time limit of one hour was imposed for each run.
Throughout this section we use the following notation to describe the components of our algorithm: bound based \KT{strengthening} (B), enumerative \KT{strengthening} (E), greedy integer separation (G), \KT{strengthening} of interdiction cuts (I), and any combination of these letters to denote the settings involving the corresponding methods.

\subsection{Results for the KFG}
In the following, we first describe problem specific implementation details of the B\&C for the KFG. Then, we discuss the results of our experiments on two data sets derived from publicly available instances for the Knapsack Interdiction Problem.

\subsubsection{Implementation Details for the KFG}

\KT{While solving (SEP)} for the KFG, separation of integer solutions accounts to solving a KP. Though specific efficient codes are available for
the exact solution of KP instances (see, e.g., procedure {\tt combo} in \cite{MPT99}), we solve this problem as a general MIP.
We perform 
the separation of fractional solutions
using the classical greedy algorithm for the knapsack problem; notice that, 
an exact separation 
of fractional solutions is not needed for the correctness of the algorithm.
Similarly, in Step~\ref{alg:gi:recourse} of Algorithm \ref{alg:GreedyInterdiction}, the recourse is solved greedily. 
Preliminary experiments showed that the \KT{strengthening} of interdiction cuts, 
though producing violated cuts, leads to  attacker solutions of poor quality, which reduces the strength of the fortification cuts. 
Besides, we did not observe any significant reduction in the solution time of (SEP) for a given leader solution, 
%
thus, for the KFG we focus on the algorithm configurations without component I.

As to the enumerative \KT{strengthening} procedure described in Section \ref{section:Lift_enum},  instead of solving the recourse problem exactly (i.e., solving the KP as an MIP) we only solve its LP relaxation, cf.\ Step~\ref{alg:el:recourse} of Algorithm~\ref{alg:EnumLift}.
In this way we obtain, with limited computational effort, a valid dual bound that can be used for \KT{strengthening} according to the observation in Remark \ref{remark_liftEnum}.
As described in Section \ref{section:lifting details}, a set $P$ satisfying the conditions for \KT{strengthening} can be removed from consideration if it includes items that cannot be used together in some feasible recourse solution.
To eliminate such $P$, we simply check the knapsack constraint for the recourse solution $y$ having $y_i=1$, $\forall i\in P$ and $y_i=0$, $\forall i\notin P$. If it is violated, we move to the next iteration. \KT{During the solution of any instance, if 10 successive trials of enumerative strengthening do not produce a strictly tighter cut, we do not employ this feature for the newly generated cuts.}

\subsubsection{TRS Knapsack Interdiction Dataset}
In this section, we consider the 150 instances introduced by \cite{tang2016class} for the 
Knapsack Interdiction Problem, and add a 
fortification level involving a cardinality constraint on the number of items that can be fortified. 
We use the fortification budget levels $B_F =3$ and $B_F=5$, thus producing a set of 300 benchmark instances, denoted as TRS. 
We solved each instance with several algorithm settings, including components B, E, and G 
in an incremental way. 
In Figure \ref{fig:TRS_rootgap}, we show the cummulative distribution of root gaps and solution times for the TRS instances. 
It can be seen that bound based \KT{strengthening slightly reduces} the solution times although it does not produce
consistent improvements in terms of root gaps. The next component, enumerative \KT{strengthening}, improves both measures, while its effect on the root gaps is more evident. It turns out that enhancing the integer separation by the greedy algorithm improves the performance as well,
in particular for what concerns the maximum time needed for solving the instances, which is 
significantly smaller with the setting BEG than with the other settings considered. All 300 instances are solved optimally within 100 seconds under all four settings. With BEG, the maximum solution time is \KT{18} seconds (cf.\ Figure \ref{fig:TRS_rootgap}).

\begin{figure}[h!tb]
\includegraphics[width=0.5\textwidth]{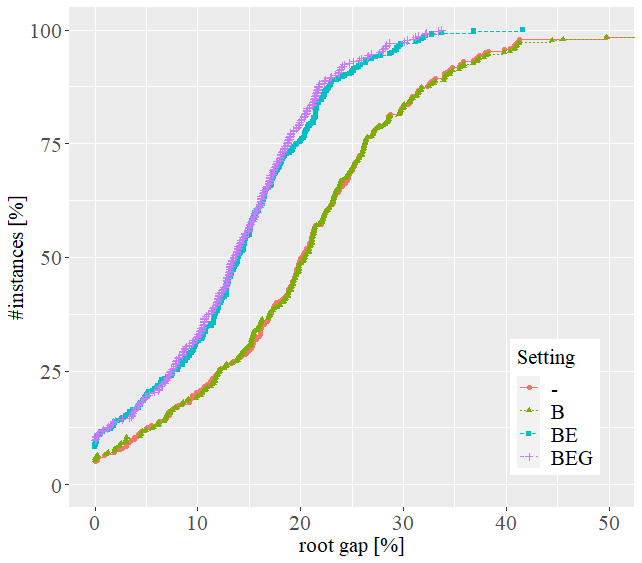}%
\includegraphics[width=0.5\textwidth]{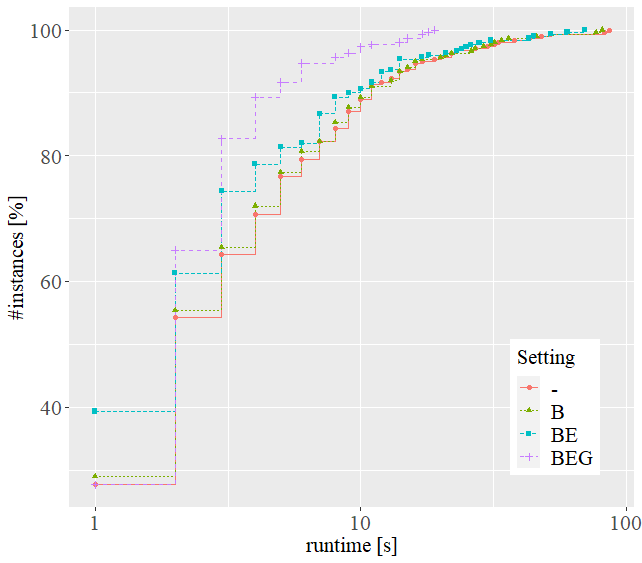}
\caption{Root gaps and solution times of TRS data set instances.\label{fig:TRS_rootgap}}
\end{figure}

\subsubsection{CCLW Knapsack Interdiction Dataset}

Our second benchmark for the KFG is derived from the instances proposed by \cite{caprara2016bilevel} 
for the Knapsack Interdiction Problem. In this case too, we introduce an additional fortification level, and impose 
a cardinality constraint for the leader. For interdiction costs and budgets we used the data available in the original instances.
The number of items takes value in $\{35, 40, 45, 50, 55\}$ and for each size there are ten instances in which the interdiction budget increases 
with the $ID$ of the instance, $1\leq ID \leq 10$. We solved these 50 instances for $B_F \in \{3, 5\}$, 
thus producing a set of 100 benchmark instances denoted as CCLW. 

Figure \ref{fig:CCLW_plot} shows the cumulative distribution of root gaps and solution times for the instances in this benchmark set. 
It can be seen that bound based \KT{strengthening} does not have an effect in both measures. 
This is mostly due to the fact that we are able to produce only a few \KT{strengthened} cuts, as the value of $\Phi(\hat{x})-\underline{z}$ 
is usually larger than $d_i$ in our experiments. 
Enumerative \KT{strengthening} does not contribute to decreasing the solution times because of its time complexity, which depends on the interdiction budget; however this strategy decreases the root gap on average. Note that the root gap reduction is larger for smaller budget levels. 
We also observe smaller number of B\&C nodes when enumerative \KT{strengthening} is activated, although we do not report details about this kind of 
measure here.
Similar to the results on the TRS data set, the most effective component \KT{regarding the solution time} is greedy integer separation ($G$). We observe in our experiments that Algorithm \ref{alg:GreedyInterdiction} is able to find good interdiction strategies quickly, which makes the separation process more efficient compared to the default setting where we stop solving (SEP) once a violated fortification is found. We are able to solve 95 out of 100 instances optimally in one hour when using setting BEG.

\begin{figure}[h!tb]
\includegraphics[width=0.5\textwidth]{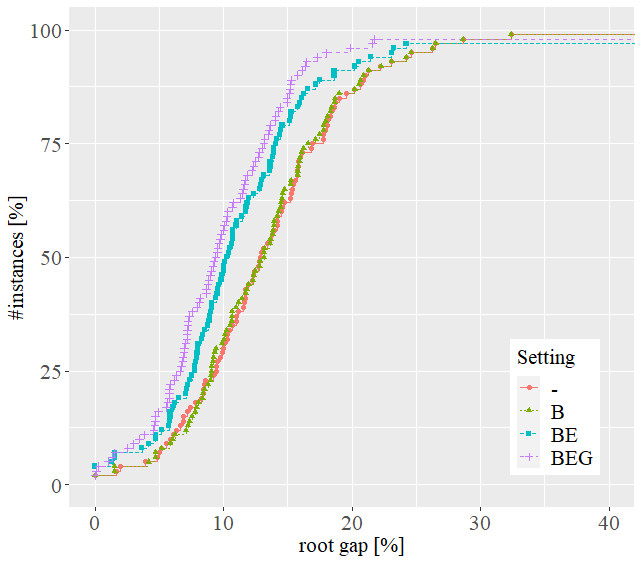}%
\includegraphics[width=0.5\textwidth]{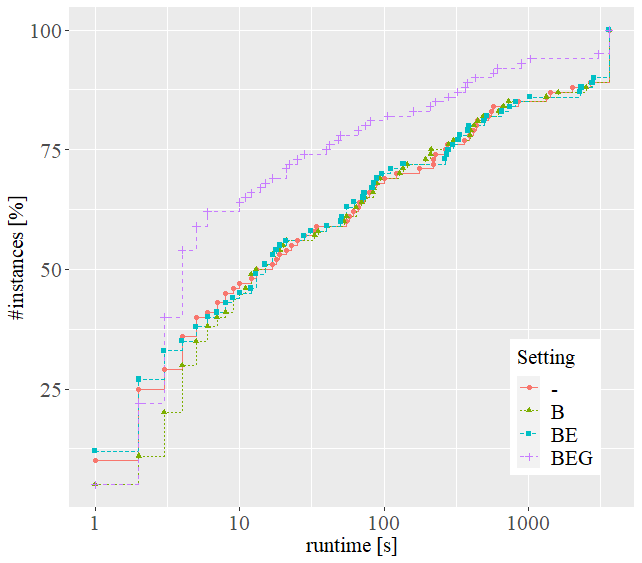}
\caption{Root gaps and solution times of CCLW data set instances.\label{fig:CCLW_plot}}
\end{figure}

\subsection{Results for the SPFG}

In the following, we first describe problem specific implementation details of the B\&C algorithm for the SPFG. Then, we present the results of our experiments on two existing data sets from the literature.

\subsubsection{Implementation Details for the SPFG}
The recourse problem, which is the shortest path problem, is solved via Dijkstra's algorithm \citep{dijkstra1959note} with a priority queue implementation, wherever needed, e.g., in
Algorithm~\ref{alg:GreedyInterdiction} and Algorithm~\ref{alg:EnumLift}. 
While solving (SEP) to obtain a fortification cut, both integer and fractional solutions are separated exactly.
Separation is carried out in $O(|A|\log |V|)$ time. 
In addition to the lower cutoff and solution limit strategies, interdiction cut \KT{strengthening} is used to speed up the solution of (SEP). 
Recall that any value $l>\theta^*$ can be used to lift the interdiction cut. Since all the problem parameters are integers in out data sets, we choose $l=\lceil\theta^*+\epsilon \rceil$. 
Remind that fortifying two assets (i.e., arcs) at the same time can be more effective than the sum of their individual effects only if the
two arcs can be used together in a recourse solution (see Section \ref{section:lifting details}). Accordingly, while executing 
Algorithm~\ref{alg:EnumLift}, we try to not consider all sets $P$ including pairs of arcs that cannot appear together in an $s-t$ path.
Since checking this exactly would be time consuming,
we only check if any two arcs in $P$ have the same head or tail and skip the current iteration if there is such an arc pair, which indicates that an $s-t$ path cannot include both of these arcs. 

\subsubsection{Directed Grid Networks}

\begin{table}[b!]
	\centering \small   
 \caption{The sizes of the grid networks.\label{tab:GridNetSizes}}
    \begin{tabular}{crr}
\toprule
Instance & \multicolumn{1}{c}{Nodes} & \multicolumn{1}{c}{Arcs} \\
 \midrule
 10x10 & 102   & 416 \\
    20x20 & 402   & 1,826 \\
    30x30 & 902   & 4,236 \\
    40x40 & 1,602 & 7,646 \\
    50x50 & 2,502 & 12,056 \\
   60x60 & 3,602 & 17,466 \\
    \bottomrule
    \end{tabular}
\end{table}%

For the SPFG, the first data set we use is the directed grid network instance set generated by \cite{lozano2017backward} following the topology used by \cite{israeli2002shortest} and \cite{cappanera2011optimal}. These instances \MS{and the implementation of the solution algorithm proposed in \cite{lozano2017backward}} are available at \url{https://doi.org/10.1287/ijoc.2016.0721}. In these networks, in addition to a source and a sink node, there are $m\times n$ nodes forming a grid with $m$ rows and $n$ columns. The source (sink) node is linked with each of the grid nodes in the first (last) column. The sizes of the networks are shown in Table \ref{tab:GridNetSizes}. We use the cost ($c_{ij}$) and delay ($d_{ij}$) values 
provided with the networks, which were generated randomly in $[1,c_{max}]$ and $[1,d_{max}]$, respectively, using six different $c_{max}$ and $d_{max}$ combinations. \KT{For each network size, ten instances with different arc costs/delays are generated.} Each instance is solved for six different fortification and interdiction budget level configurations $(B_F,B_I)$ also used in \citet{lozano2017backward}.

In Figure \ref{fig:Grid_plot} the cumulative distributions of root gaps and solution times 
are plotted for three settings: 
the basic one ``-", the best performing setting for KFG instances (BEG), and a variant additionally considering interdiction cut \KT{strengthening} (IBEG). 
The plot on the left hand side shows that the root gaps of variant BEG are substantially smaller than those of the basic setting.
It is also clear from the plots that the \KT{strengthening} of interdiction cuts using $l=\lceil\theta^*+\epsilon \rceil$ produces a remarkable improvement. 
We observed that,  
using this strategy the number of B\&C nodes increased with respect to BEG, since attacker solutions with smaller objective values 
(and, in turn, weaker fortification cuts) are produced. Nevertheless, in this case, the separation problem can be solved extremely efficiently, which 
overall leads to improved performances.

\begin{figure}[h!tb]
\includegraphics[width=0.5\textwidth]{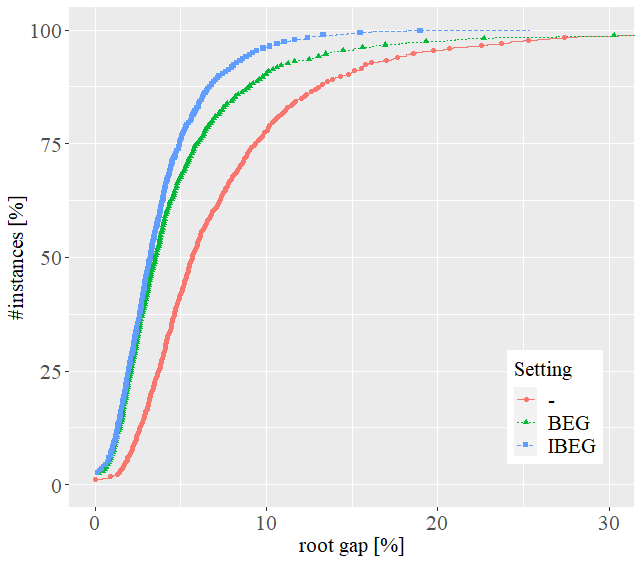}%
\includegraphics[width=0.5\textwidth]{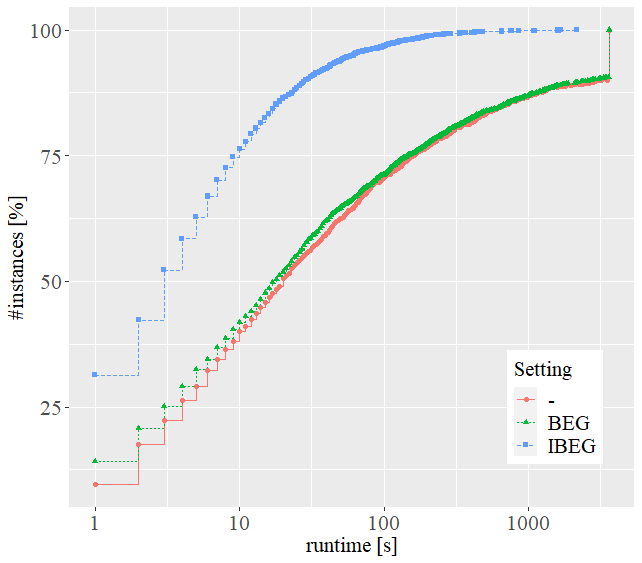}
\caption{Root gaps and solution times of Grid networks data set.\label{fig:Grid_plot}}
\end{figure}

Tables \ref{tab:GridRuntimes1} and \ref{tab:GridRuntimes2} report
the average and maximum solution times required by our algorithm with setting IBEG. These figures are compared with the results \MS{obtained by the solution algorithm of \cite{lozano2017backward}, denoted as LS. Since 
the implementation of this algorithm is available online, we were able to run it on the same hardware as our algorithm. The algorithm was coded in Java \ML{and} we ran it with Gurobi 9.2 as MIP solver.} 
In both tables, rows are associated to a network size and a configuration for the fortification and interdiction budget levels,
whereas the three blocks of columns refer to different cost-delay configurations \KT{$(c_{max}\mbox{-}d_{max})$, i.e., (10-5), (10-10), and (10-20) in Table  \ref{tab:GridRuntimes1}; (100-50), (100-100), and (100-200) in Table  \ref{tab:GridRuntimes2}}.

\begin{table}[h!tb]
  \centering
 \caption{SPFG Solution Times for the Directed Grid Networks with Setting IBEG \MM{and with algorithm LS} (Part 1).\label{tab:GridRuntimes1}}
     \footnotesize
    \begin{tabular}{cccrrrr|rrrr|rrrr}
\toprule         &       &       & \multicolumn{4}{c|}{(10-5)}  & \multicolumn{4}{c|}{(10-10)} & \multicolumn{4}{c}{(10-20)} \\
\cmidrule{4-15}           &       &       & \multicolumn{2}{c}{\AlgName} & \multicolumn{2}{c|}{LS} & \multicolumn{2}{c}{\AlgName} & \multicolumn{2}{c|}{LS} & \multicolumn{2}{c}{\AlgName} & \multicolumn{2}{c}{LS} \\
\cmidrule{4-15}           & $B_F$     & $B_I$     & Avg   & Max   & Avg   & Max   & Avg   & Max   & Avg   & Max   & Avg   & Max   & Avg   & Max \\
    \midrule
    \multirow{6}[2]{*}{10x10} & 3     & 3     & 0.1   & 0.1   & 0.1   & 0.2   & 0.1   & 0.1   & 0.1   & 0.2   & 0.1   & 0.1   & 0.1   & 0.2 \\
          & 4     & 3     & 0.1   & 0.1   & 0.1   & 0.3   & 0.1   & 0.2   & 0.2   & 0.2   & 0.1   & 0.1   & 0.2   & 0.2 \\
          & 3     & 4     & 0.1   & 0.2   & 0.2   & 0.3   & 0.2   & 0.2   & 0.3   & 0.4   & 0.2   & 0.2   & 0.3   & 0.6 \\
          & 5     & 4     & 0.2   & 0.2   & 0.3   & 0.5   & 0.2   & 0.3   & 0.4   & 0.5   & 0.2   & 0.4   & 0.4   & 0.6 \\
          & 4     & 5     & 0.2   & 0.3   & 0.4   & 0.7   & 0.2   & 0.3   & 0.6   & 1.0   & 0.3   & 0.4   & 0.7   & 1.5 \\
          & 7     & 5     & 0.3   & 0.6   & 1.1   & 1.7   & 0.4   & 0.6   & 1.1   & 1.5   & 0.6   & 1.0   & 1.3   & 1.9 \\
    \midrule
    \multirow{6}[2]{*}{20x20} & 3     & 3     & 0.3   & 0.5   & 0.4   & 1.0   & 0.3   & 0.6   & 0.6   & 2.1   & 0.3   & 0.8   & 0.6   & 2.6 \\
          & 4     & 3     & 0.3   & 0.5   & 0.5   & 1.0   & 0.4   & 0.8   & 0.7   & 2.5   & 0.4   & 0.9   & 0.7   & 2.7 \\
          & 3     & 4     & 0.4   & 1.0   & 0.6   & 1.3   & 0.4   & 0.8   & 1.0   & 2.5   & 0.5   & 0.8   & 1.2   & 5.2 \\
          & 5     & 4     & 0.6   & 1.4   & 0.9   & 1.8   & 0.8   & 1.8   & 1.5   & 5.1   & 0.7   & 1.4   & 1.7   & 7.3 \\
          & 4     & 5     & 0.8   & 1.2   & 2.1   & 5.6   & 0.9   & 1.6   & 2.7   & 9.9   & 0.9   & 1.4   & 3.0   & 11.4 \\
          & 7     & 5     & 1.7   & 3.3   & 3.3   & 5.9   & 2.4   & 3.4   & 5.1   & 16.7  & 2.6   & 3.9   & 7.1   & 31.5 \\
    \midrule
    \multirow{6}[2]{*}{30x30} & 3     & 3     & 0.6   & 1.0   & 0.9   & 1.2   & 0.6   & 1.2   & 1.4   & 3.8   & 0.6   & 1.0   & 3.6   & 19.0 \\
          & 4     & 3     & 0.6   & 0.9   & 1.1   & 1.8   & 0.8   & 1.3   & 1.8   & 4.6   & 0.8   & 1.2   & 3.9   & 19.6 \\
          & 3     & 4     & 1.2   & 1.8   & 1.9   & 2.7   & 1.4   & 2.9   & 5.6   & 27.7  & 1.4   & 3.1   & 8.0   & 35.1 \\
          & 5     & 4     & 1.9   & 3.4   & 2.7   & 3.9   & 2.1   & 3.1   & 5.8   & 16.4  & 2.0   & 2.9   & 9.2   & 35.9 \\
          & 4     & 5     & 2.7   & 4.9   & 6.4   & 9.9   & 2.9   & 5.9   & 19.2  & 110.5 & 3.4   & 8.7   & 21.5  & 108.7 \\
          & 7     & 5     & 5.6   & 9.7   & 12.3  & 24.5  & 7.2   & 13.8  & 29.2  & 146.2 & 7.6   & 12.8  & 28.0  & 115.1 \\
    \midrule
    \multirow{6}[2]{*}{40x40} & 3     & 3     & 1.7   & 3.5   & 2.7   & 6.0   & 1.8   & 4.3   & 5.4   & 26.9  & 1.8   & 4.3   & 7.8   & 43.3 \\
          & 4     & 3     & 1.9   & 3.0   & 3.6   & 7.3   & 2.4   & 4.6   & 6.8   & 32.9  & 2.2   & 3.8   & 9.0   & 45.6 \\
          & 3     & 4     & 2.7   & 5.1   & 3.9   & 6.3   & 3.3   & 8.2   & 37.4  & 267.9 & 3.3   & 9.3   & 63.6  & 540.0 \\
          & 5     & 4     & 4.9   & 8.4   & 9.1   & 30.5  & 5.3   & 12.3  & 42.2  & 260.8 & 5.9   & 14.4  & 69.9  & 549.6 \\
          & 4     & 5     & 6.0   & 11.3  & 13.7  & 33.4  & 10.5  & 38.8  & 247.3 & 1943.5 & 9.5   & 30.9  & 377.7 & 2786.6 \\
          & 7     & 5     & 15.6  & 32.4  & 26.5  & 76.0  & 22.9  & 66.8  & 486.4 & 3514.0 & 27.9  & 124.1 & 516.7 & 4106.4 \\
    \midrule
    \multirow{6}[2]{*}{50x50} & 3     & 3     & 2.1   & 3.4   & 4.4   & 11.1  & 2.1   & 4.2   & 10.7  & 38.0  & 2.2   & 4.7   & 13.3  & 41.6 \\
          & 4     & 3     & 2.8   & 6.8   & 6.6   & 25.2  & 2.8   & 5.0   & 19.7  & 82.2  & 2.7   & 5.3   & 14.4  & 45.4 \\
          & 3     & 4     & 3.3   & 8.1   & 13.1  & 83.7  & 5.0   & 11.8  & 53.4  & 240.4 & 4.8   & 17.0  & 52.5  & 371.5 \\
          & 5     & 4     & 5.9   & 9.6   & 15.8  & 87.6  & 7.6   & 20.2  & 112.9 & 627.1 & 8.5   & 28.1  & 61.1  & 416.2 \\
          & 4     & 5     & 9.5   & 24.0  & 38.6  & 276.0 & 17.7  & 75.9  & 219.6 & 1068.9 & 14.6  & 67.5  & 112.6 & 858.0 \\
          & 7     & 5     & 24.8  & 45.9  & 77.9  & 518.4 & 33.5  & 124.3 & 407.3 & 2325.3 & 38.1  & 149.2 & 102.3 & 624.7 \\
    \midrule
    \multirow{6}[2]{*}{60x60} & 3     & 3     & 4.0   & 8.1   & 11.4  & 30.4  & 4.4   & 7.4   & 21.8  & 65.6  & 4.1   & 6.4   & 32.0  & 73.7 \\
          & 4     & 3     & 5.4   & 13.1  & 12.6  & 33.9  & 5.9   & 13.3  & 32.7  & 143.7 & 6.1   & 18.2  & 38.0  & 95.8 \\
          & 3     & 4     & 7.1   & 13.9  & 15.1  & 28.2  & 10.5  & 18.1  & 69.6  & 299.4 & 11.1  & 23.3  & 95.0  & 426.7 \\
          & 5     & 4     & 12.7  & 27.9  & 21.2  & 54.8  & 16.7  & 30.6  & 134.4 & 905.8 & 16.6  & 28.8  & 115.8 & 575.2 \\
          & 4     & 5     & 18.8  & 26.3  & 40.5  & 76.6  & 35.5  & 107.3 & 285.4 & 915.7 & 30.4  & 75.1  & 265.7 & 1056.2 \\
          & 7     & 5     & 45.7  & 80.6  & 108.6 & 351.8 & 87.0  & 185.0 & 624.8 & 3440.4 & 74.2  & 127.9 & 474.1 & 2155.0 \\
    \bottomrule
    \end{tabular}%
  \label{tab:addlabel}%
\end{table}%

Table \ref{tab:GridRuntimes1} contains the results for smallest three cost-delay configurations, and shows that \AlgName{} yields \KT{significantly} smaller 
average and maximum solution times \KT{for most of the instances}.
In \KT{104 (107)} out of 108 configurations, \AlgName{} produces a strictly smaller average (maximum) solution time. 
For \KT{many instances of larger sizes} our algorithm is faster than LS by one order of magnitude.
\KT{Overall}, the performance improvement of \AlgName{} over LS becomes more pronounced as the $c_{max}$ and $d_{max}$ values increase.

\begin{table}[h!tb]
\centering
\caption{SPFG Solution Times for the Directed Grid Networks with Setting IBEG \MM{and with algorithm LS} (Part 2).\label{tab:GridRuntimes2}}
\footnotesize
    \begin{tabular}{cccrrrr|rrrr|rrrr}
\midrule         &       &       & \multicolumn{4}{c|}{(100-50)}  & \multicolumn{4}{c|}{(100-100)} & \multicolumn{4}{c}{(100-200)} \\
\cmidrule{4-15}           &       &       & \multicolumn{2}{c}{\AlgName} & \multicolumn{2}{c|}{LS} & \multicolumn{2}{c}{\AlgName} & \multicolumn{2}{c|}{LS} & \multicolumn{2}{c}{\AlgName} & \multicolumn{2}{c}{LS} \\
\cmidrule{4-15}           & $B_F$     & $B_I$     & Avg   & Max   & Avg   & Max   & Avg   & Max   & Avg   & Max   & Avg   & Max   & Avg   & Max \\
    \midrule
    \multirow{6}[2]{*}{10x10} & 3     & 3     & 0.1   & 0.2   & 0.1   & 0.2   & 0.3   & 0.4   & 0.2   & 0.3   & 0.1   & 0.2   & 0.2   & 0.2 \\
          & 4     & 3     & 0.2   & 0.2   & 0.2   & 0.2   & 0.2   & 0.2   & 0.2   & 0.3   & 0.2   & 0.3   & 0.2   & 0.3 \\
          & 3     & 4     & 0.3   & 0.4   & 0.3   & 0.5   & 0.2   & 0.4   & 0.4   & 0.6   & 0.3   & 0.4   & 0.4   & 0.9 \\
          & 5     & 4     & 0.3   & 0.4   & 0.4   & 0.5   & 0.3   & 0.7   & 0.5   & 0.8   & 0.3   & 0.5   & 0.5   & 0.9 \\
          & 4     & 5     & 0.4   & 0.5   & 0.8   & 1.3   & 0.5   & 0.7   & 1.0   & 2.0   & 0.6   & 0.9   & 1.1   & 2.4 \\
          & 7     & 5     & 0.5   & 1.0   & 1.3   & 2.4   & 0.8   & 1.2   & 1.7   & 3.3   & 0.9   & 1.7   & 1.9   & 3.4 \\
    \midrule
    \multirow{6}[2]{*}{20x20} & 3     & 3     & 0.5   & 0.8   & 0.7   & 1.9   & 0.5   & 0.7   & 0.7   & 1.1   & 0.5   & 0.9   & 0.7   & 1.5 \\
          & 4     & 3     & 0.6   & 0.9   & 0.9   & 1.7   & 0.6   & 1.1   & 0.8   & 1.2   & 0.7   & 1.2   & 0.8   & 1.7 \\
          & 3     & 4     & 1.0   & 1.5   & 2.1   & 5.4   & 1.0   & 1.3   & 1.8   & 4.4   & 0.9   & 1.6   & 1.8   & 4.6 \\
          & 5     & 4     & 1.7   & 3.1   & 2.6   & 4.8   & 1.8   & 3.0   & 2.4   & 6.1   & 1.7   & 3.3   & 2.4   & 5.7 \\
          & 4     & 5     & 2.4   & 3.4   & 5.8   & 14.5  & 2.7   & 4.7   & 7.7   & 22.6  & 2.4   & 4.5   & 7.8   & 25.2 \\
          & 7     & 5     & 6.3   & 9.6   & 7.7   & 12.3  & 5.9   & 8.9   & 9.4   & 22.9  & 5.5   & 8.7   & 9.5   & 28.7 \\
    \midrule
    \multirow{6}[2]{*}{30x30} & 3     & 3     & 1.2   & 2.3   & 2.9   & 17.5  & 1.5   & 3.1   & 1.9   & 4.0   & 1.3   & 2.2   & 2.4   & 6.0 \\
          & 4     & 3     & 1.4   & 2.9   & 2.1   & 9.5   & 1.7   & 2.9   & 2.7   & 6.8   & 1.6   & 2.5   & 2.7   & 6.8 \\
          & 3     & 4     & 2.5   & 5.1   & 4.6   & 13.7  & 2.3   & 3.3   & 5.1   & 14.7  & 2.4   & 3.4   & 4.8   & 10.8 \\
          & 5     & 4     & 5.1   & 12.7  & 7.3   & 22.3  & 4.0   & 7.2   & 7.6   & 24.0  & 3.9   & 6.1   & 7.1   & 22.6 \\
          & 4     & 5     & 6.7   & 10.0  & 19.0  & 97.4  & 6.0   & 10.1  & 22.3  & 52.6  & 6.3   & 10.8  & 19.6  & 75.3 \\
          & 7     & 5     & 19.8  & 47.5  & 29.9  & 89.2  & 16.1  & 23.4  & 49.1  & 218.1 & 16.1  & 23.8  & 42.6  & 175.1 \\
    \midrule
    \multirow{6}[2]{*}{40x40} & 3     & 3     & 2.3   & 4.3   & 3.4   & 6.7   & 2.8   & 4.5   & 6.9   & 31.0  & 2.4   & 3.6   & 6.1   & 18.9 \\
          & 4     & 3     & 3.0   & 3.9   & 4.1   & 6.6   & 3.2   & 7.0   & 9.3   & 53.3  & 3.2   & 7.3   & 7.5   & 23.7 \\
          & 3     & 4     & 6.6   & 11.8  & 11.3  & 37.0  & 6.1   & 13.9  & 14.9  & 40.1  & 4.9   & 9.0   & 13.0  & 21.0 \\
          & 5     & 4     & 11.0  & 22.7  & 16.3  & 50.3  & 10.0  & 18.6  & 27.2  & 137.6 & 8.6   & 14.4  & 22.4  & 85.4 \\
          & 4     & 5     & 21.3  & 49.9  & 32.5  & 78.3  & 22.9  & 92.3  & 75.2  & 277.6 & 13.7  & 33.1  & 44.9  & 138.1 \\
          & 7     & 5     & 56.8  & 95.1  & 53.0  & 128.1 & 57.9  & 144.5 & 157.2 & 934.0 & 50.4  & 174.6 & 114.4 & 705.3 \\
    \midrule
    \multirow{6}[2]{*}{50x50} & 3     & 3     & 4.3   & 10.1  & 10.0  & 33.0  & 5.6   & 18.4  & 36.4  & 234.9 & 5.2   & 19.4  & 40.5  & 182.0 \\
          & 4     & 3     & 5.6   & 12.4  & 10.1  & 27.5  & 6.0   & 17.3  & 49.5  & 359.1 & 6.2   & 23.3  & 40.8  & 173.8 \\
          & 3     & 4     & 12.9  & 37.0  & 28.7  & 98.3  & 15.0  & 42.9  & 108.8 & 763.7 & 9.5   & 25.8  & 76.2  & 268.2 \\
          & 5     & 4     & 27.3  & 89.4  & 61.1  & 204.3 & 26.6  & 102.6 & 417.3 & 3678.8 & 20.0  & 52.2  & 104.7 & 496.7 \\
          & 4     & 5     & 57.3  & 206.9 & 120.4 & 494.4 & 69.4  & 251.0 & 253.2 & 1436.7 & 58.5  & 227.9 & 309.5 & 1545.9 \\
          & 7     & 5     & 281.0 & 1097.6 & 238.5 & 1099.6 & 135.7 & 402.5 & 912.6 & 7746.5 & 93.4  & 334.8 & 469.2 & 3025.0 \\
    \midrule
    \multirow{6}[2]{*}{60x60} & 3     & 3     & 8.2   & 25.0  & 14.0  & 40.9  & 9.2   & 28.2  & 52.1  & 338.1 & 8.3   & 22.0  & 62.3  & 250.7 \\
          & 4     & 3     & 11.4  & 24.4  & 17.6  & 58.2  & 11.4  & 34.7  & 76.2  & 552.1 & 12.9  & 41.9  & 68.4  & 297.4 \\
          & 3     & 4     & 21.2  & 33.8  & 44.9  & 92.0  & 25.7  & 104.6 & 260.6 & 1790.4 & 23.2  & 99.1  & 180.7 & 866.1 \\
          & 5     & 4     & 45.5  & 169.6 & 64.9  & 194.8 & 52.7  & 246.0 & 254.0 & 1726.6 & 33.1  & 110.1 & 202.4 & 1029.8 \\
          & 4     & 5     & 92.5  & 365.0 & 148.2 & 391.5 & 167.4 & 854.4 & 1281.5 & 7442.6 & 222.9 & 1677.1 & 1120.4 & 5620.4 \\
          & 7     & 5     & 271.8 & 761.7 & 329.8 & 1325.5 & 368.6 & 1608.8 & 1256.7 & 6658.2 & 364.7 & 2149.1 & 1493.8 & 8691.8 \\
    \bottomrule
    \end{tabular}%
  \label{tab:addlabel}%
\end{table}%

Table \ref{tab:GridRuntimes2} gives results for the largest cost-delay configurations. For these instances, \KT{again \AlgName{} 
outperforms LS for most of the instances}. Here, in \KT{100 (103)} out of 108 configurations, \AlgName{} produces a strictly smaller average (maximum) solution time.
As the instances get more difficult, \AlgName{} becomes more advantageous. For the most difficult group of instances with 
\KT{$n=60$ and $(c_{max}-d_{max})$ is (100-100) or (100-200), the average solution times with \AlgName{} are 80\% and 78\% smaller, respectively,  than their LS counterparts.}
Moreover, \AlgName{} solves all instances within the time limit of one hour.

\subsubsection{Real Road Networks}

\begin{table}[h!tb] 
\centering
\caption{Results for instance set \texttt{road network}.\label{tab:RoadNetTimes}}
\small
	\begin{tabular}{cccccrrr|rrr}
		\toprule
		\multicolumn{5}{l|}{} &\multicolumn{3}{c|}{\AlgName} &\multicolumn{3}{c}{LS}  \\
\multicolumn{1}{l}{Instance} & \multicolumn{1}{c}{Nodes} & \multicolumn{1}{c}{Arcs} & \multicolumn{1}{l}{$B_F$} & \multicolumn{1}{l|}{$B_I$} & \multicolumn{1}{l}{Avg(s.)} & \multicolumn{1}{l}{Max(s.)} & \multicolumn{1}{l|}{Solved} & \multicolumn{1}{l}{Avg(s.)} & \multicolumn{1}{l}{Max(s.)} & \multicolumn{1}{l}{Solved} 		 \\
\midrule
    \multicolumn{1}{l}{DC} & \multicolumn{1}{r}{9559} & \multicolumn{1}{r}{39377} & 3     & \multicolumn{1}{c|}{3} & 9.5   & 15.1  & \multicolumn{1}{r|}{9} & 35.0  & 99.2  & 9 \\
          &       &       & 4     & \multicolumn{1}{c|}{3} & 14.2  & 29.4  & \multicolumn{1}{r|}{9} & 39.9  & 101.8 & 9 \\
          &       &       & 3     & \multicolumn{1}{c|}{4} & 18.1  & 32.2  & \multicolumn{1}{r|}{9} & 94.2  & 453.7 & 9 \\
          &       &       & 5     & \multicolumn{1}{c|}{4} & 45.3  & 68.4  & \multicolumn{1}{r|}{9} & 104.0 & 426.5 & 9 \\
          &       &       & 4     & \multicolumn{1}{c|}{5} & 51.5  & 86.3  & \multicolumn{1}{r|}{9} & 523.6 & 2959.7 & 9 \\
          &       &       & 7     & \multicolumn{1}{c|}{5} & 123.7 & 230.7 & \multicolumn{1}{r|}{9} & 608.4 & 2126.0 & 9 \\
    \midrule
    \multicolumn{1}{l}{RI} & \multicolumn{1}{r}{53658} & \multicolumn{1}{r}{192084} & 3     & \multicolumn{1}{c|}{3} & 88.2  & 234.2 & \multicolumn{1}{r|}{9} & 137.7 & 588.3 & 9 \\
          &       &       & 4     & \multicolumn{1}{c|}{3} & 164.3 & 289.3 & \multicolumn{1}{r|}{9} & 144.2 & 570.1 & 9 \\
          &       &       & 3     & \multicolumn{1}{c|}{4} & 285.7 & 960.1 & \multicolumn{1}{r|}{9} & 463.6 & 3031.8 & 9 \\
          &       &       & 5     & \multicolumn{1}{c|}{4} & 389.0 & 1432.8 & \multicolumn{1}{r|}{9} & 465.4 & 2621.2 & 9 \\
          &       &       & 4     & \multicolumn{1}{c|}{5} & 1084.4 & 5766.4 & \multicolumn{1}{r|}{9} & 1896.3 & 13758.4 & 9 \\
          &       &       & 7     & \multicolumn{1}{c|}{5} & 1565.7 & 6042.8 & \multicolumn{1}{r|}{9} & 2092.1 & TL & 8 \\
          \midrule
    \multicolumn{1}{l}{NJ} & \multicolumn{1}{r}{330386} & \multicolumn{1}{r}{1202458} & 3     & \multicolumn{1}{c|}{3} & 858.3 & 1997.3 & \multicolumn{1}{r|}{9} & 988.5 & 5442.5 & 9 \\
          &       &       & 4     & \multicolumn{1}{c|}{3} & 1078.8 & 2906.5 & \multicolumn{1}{r|}{9} & 2057.4 & TL & 8 \\
          &       &       & 3     & \multicolumn{1}{c|}{4} & 1576.3 & 2849.0 & \multicolumn{1}{r|}{9} & 2217.3 & TL & 8 \\
          &       &       & 5     & \multicolumn{1}{c|}{4} & 2968.8 & 8052.1 & \multicolumn{1}{r|}{9} & 2557.4 & TL & 8 \\
          &       &       & 4     & \multicolumn{1}{c|}{5} & 5695.8 & TL & \multicolumn{1}{r|}{8} & 2702.0 & TL & 8 \\
          &       &       & 7     & \multicolumn{1}{c|}{5} & 10042.3 & TL & \multicolumn{1}{r|}{7} & 3885.4 & TL & 8 \\
    \midrule
    \multicolumn{5}{c}{Total}             &       &       & 159   &       &       & 156 \\
    \bottomrule
    \end{tabular}%
  \label{tab:addlabel}%
\end{table}%

For the second part of the SPFG experiments, we considered the real road network data sets of the cities Washington (DC),
Rhode Island (RI), and New Jersey (NJ), used by \cite{raith2009comparison} and \cite{lozano2017backward}. While the original undirected data sets are available at \url{http://www.diag.uniroma1.it//~challenge9/data/tiger/}, we used the data sets provided by \cite{lozano2017backward} where each edge is replaced by two arcs to get directed networks and in which connectivity is enforced by additional high-cost arcs. The costs of the arcs are equal to the distances and the delay is 10\,000 for each arc. On each of the three networks, the SPFG is solved for nine distinct $s-t$ pairs and using the six fortification and interdiction budget combinations used for the grid networks, resulting in 162 instances. As these are the most challenging instances considered in our study, the  time limit is set to four hours for each run. 
Table \ref{tab:RoadNetTimes} reports the sizes of the networks as well as average and maximum solution times over the nine instances with the same size and budget combination for IBEG and LS, as reported in \citet{lozano2017backward}.
The table shows that our default setting IBEG is able to solve 159 over 162 instances optimally within the time limit. {Notice that our B\&C algorithm solves \KT{three} instances more than the sampling based algorithm by \cite{lozano2017backward}, \KT{which was run on the same machine as \AlgName{} and produced significantly smaller solution times than the ones reported in their paper}. In addition, when both algorithms solve all the instances, IBEG is significantly faster than LS: the average computing time and the maximum computing times are 
below \KT{48\% and 72\% on the average}, respectively, of their counterpart for LS.
}

\section{Conclusion}
\label{section:conclusions}
In this study we address fortification games (FGs), i.e., defender-attacker-defender problems which involve fortification, interdiction and recourse decisions, respectively. These problems are interesting from a theoretical viewpoint and have many real-world applications, in, e.g., military operations or the design of robust networks. We introduce \emph{fortification cuts}, which allow for a single-level exact reformulation of this complex trilevel optimization problem,
\MM{and a solution approach which uses the} cuts within a branch-and-cut algorithm and works in the space of the fortification variables only.

While mainly focusing on the version where interdicting an asset depreciates its usefulness for the defender, we also show that our methodology is directly applicable to FGs with complete destruction of interdicted assets.
After introducing the basic fortification cuts, 
we present two \KT{strengthening} procedures that can be used to produce stronger inequalities, and give a detailed description of the separation procedure. 
We provide numerical results for two relevant test-cases, namely knapsack fortification games and shortest path fortification games. 
Due to the different structures and computational complexity of their recourse problems, we observe different effects of our algorithmic components in our computational study, and try to give insight about possible reasons.

Finally, we point out that our solution scheme is generic, and it leaves space for problem specific improvements. 
For example, for a given FG, one could solve the associated separation problem by means of
a state-of-the-art exact method or using some specialized heuristic instead of the greedy method proposed in this paper.
Moreover, problem-specific valid inequalities or \KT{strengthening} procedures of the fortification cuts could also be possible.

\section*{Acknowledgements}
This research was funded in whole, or in part,
by the Austrian Science Fund (FWF)[P 35160-N]. For the purpose of open access, the author has applied a CC BY public copyright licence to any Author Accepted Manuscript version arising from this submission. It is also supported supported by the Johannes Kepler University Linz, Linz Institute of Technology (LIT) (Project LIT-2019-7-YOU-211) and the JKU Business School. LIT is funded by the state of Upper Austria. The work of the third author was supported by the Air Force Office of Scientific Research under Grant no. FA8655-20-1-7012.

\bibliography{Fortification}

\end{document}